\newcommand{\be}{\begin{equation}\begin{aligned}}
\newcommand{\ee}{\end{aligned}\end{equation}}
\newcommand{\ben}{\begin{equation}\nonumber\begin{aligned}}
 \newcommand{\A}{\mathscr{A}}
 \newcommand{\R}{\mathbb{R}}
\newcommand{\N}{\mathbb{N}}
\renewcommand{\d}{{\rm d}}
\newcommand{\dist}{{\rm dist}}
\renewcommand{\leq}{\leqslant}
\renewcommand{\geq}{\geqslant}
\newcommand{\xto}{\xrightarrow}
\begin{document}

 \title[$(L^2,L^\gamma\cap H_0^1)$-continuity of reaction-diffusion equations]{Strong $(L^2,L^\gamma\cap H_0^1)$-continuity in initial data of nonlinear reaction-diffusion equation in any space dimension 
 }

\author[H. Cui, P.E. Kloeden $\&$ W. Zhao]{}

 \keywords{Smoothing property; global attractor; regularity; fractal dimension. \newline
 \emph{\text{\ \ \quad E-mails. }} h.cui@outlook.com;
 kloeden@na-uni.tuebingen.de; gshzhao@sina.com}
\subjclass[2000]{35B40, 35B41, 37L30}

\maketitle

\centerline{\scshape Hongyong Cui}
\smallskip
{\footnotesize 
 \centerline{School of Mathematics and Statistics, 
 Huazhong University of Science and Technology}
 \centerline{Wuhan 430074, China}}
 \medskip

\centerline{\scshape Peter E. Kloeden}
\smallskip
{\footnotesize
 \centerline{Mathematisches Institut, Universit\"at T\"ubingen}
 \centerline{D-72076 T\"ubingen, Germany}}
 
 \medskip
\centerline{\scshape Wenqiang Zhao}
\smallskip
{\footnotesize
 \centerline{School of Mathematics and Statistics,
 Chongqing Technology and Business University}
 \centerline{Chongqing 400067, China}}

\begin{abstract} 
 In this paper, we study the continuity in initial data of a classical reaction-diffusion equation with arbitrary $p>2$ order nonlinearity and in any space dimension $N\geq 1$. It is proved that the weak solutions can be $(L^2, L^\gamma\cap H_0^1)$-continuous in initial data for any $\gamma\geq 2$ (independent of the physical parameters of the system), i.e., can converge in the norm of any $L^\gamma\cap H_0^1$ as the corresponding initial values converge in $L^2$. Applying this to the global attractor we find that, with external forcing only in $ L^2$, the attractor $\A$ attracts bounded subsets of $L^2$ in the norm of any $L^\gamma\cap H_0^1$, and that every translation set $\A-z_0$ of $\A$ for any $z_0\in \A$ is a finite dimensional compact subset of $L^\gamma\cap H_0^1$.  The main technique we employ is a combination of the mathematical induction and a decomposition of the nonlinearity by which the continuity result is strengthened to $(L^2, L^\gamma \cap H_0^1)$-continuity   and, since interpolation inequalities are avoided, the restriction   on space dimension is removed.

\end{abstract}

\numberwithin{equation}{section}
\newtheorem{theorem}{Theorem}[section]
\newtheorem{lemma}[theorem]{Lemma}
\newtheorem{proposition}[theorem]{Proposition}
\newtheorem{corollary}[theorem]{Corollary}

\theoremstyle{definition}
\newtheorem{definition}[theorem]{Definition}

\theoremstyle{remark}
\newtheorem{remark}[theorem]{Remark}

 \tableofcontents

\section{Introduction}

 The continuity problem of solutions is definitely of significance for the study of evolution equations. As described by Evans \cite[p7]{Evans10}, ``(the continuity property) is particularly important for problems arising from physical applications: we would prefer that our (unique) solution changes only a little when the conditions specifying the problem change a little.'' In addition, the continuity property is often important for further studies of a dynamical system, e.g., for studying the regularity of global attractors \cite{cui15na,zhong06jde,cui18jdde}, constructing an exponential attractor and estimating its fractal dimensions \cite{miranville08chapter,shirikyan13spdeac}, and studying the stability of the attractor under perturbations, etc. Hence, in case the continuity result of a system is not satisfactory people have to find alternative conditions to carry out further studies. For example, the norm-to-weak continuity condition \cite{zhong06jde}, quasi strong-to-weak continuity condition \cite{cui18jdde,li08jde} and closed-graph condition \cite{pata07cpaa,zelati15jma} were introduced in various studies. Nevertheless,
 even for these cases where continuity condition can be less crucial, better continuity condition will facilitate the analysis. Hence, it is always worth a deeper study even if some continuity results have already been known under certain conditions. 
 
 In this paper we consider the following classical reaction-diffusion equation on bounded smooth domain $D\subset \R^N$ with $N\in \N$:
\be
 \frac{\d u}{\d t} -\triangle u +f(u) =g(x) ,& \label{eq0} \\
 u(0)=u_0,\quad u|_{\partial D} =0,\ &
\ee 
where $g\in L^2(D)$ and the nonlinearity $f:\R\to \R$ is a $C^1$-function satisfying some dissipative conditions, say the odd degree polynomial 
\[
 f(s)=\sum_{j=1}^{p-1} a_js^j,
\]
where $p>2$ is even and $a_{p-1}>0$. The well-known PDE result says for every initial value $u_0\in L^2(D)$ the initial-boundary problem \eqref{eq0} has a unique weak solution $u$ satisfying 
\[
 u\in C([0,\infty); L^2(D)),\quad u\in L^p_{loc}(0,\infty;L^p(D))\cap L_{loc}^2(0,\infty;H_0^1(D)) ,
\]
and $u$ continuously depends on $u_0$ in $L^2(D)$. Moreover, Robinson \cite[p227]{robinson01} argued that, ``without further restrictions on $p$ we cannot prove, for general $N$, that the map $u_0\mapsto u(t)$ is continuous (in $H_0^1(D)$), although we can prove this for $N\leq 3$.'' In other words, the strong continuity in $H_0^1 $ remained unknown for general $p>2$ and $N\geq 1$. 

 In 2008, by making using of interpolation inequalities Trujillo $\&$ Wang \cite{trujillo08na} gave a solution for the problem of Robinson. More precisely, by estimating the uniform boundedness of $tu(t)$ in $L^\infty (0,T;H^2)$ and by the continuous embedding $\|w\|_{H^1} \! \leq c\|w\|_{H^2}^{1/2} \|w\|^{1/2}$ Trujillo $\&$ Wang \cite{trujillo08na} obtained the {$(H_0^1\cap L^p, H^1_0)$-continuity} of strong solutions for all $p>2$ and $N\geq1$, where by $(X,Y)$-continuity we mean that the solutions converge in the topology of $Y$ as the initial data converge in the topology of $X$. 
 Note that a drawback of the techniques employed in \cite{trujillo08na} is the dependence on derivatives w.r.t. $t$ in both sides of \eqref{eq0}, so they do not apply to stochastic evolution equations since general stochastic processes are not differentiable.

 Then in 2015, a mathematical induction method was proposed by Cao, Sun $\&$ Yang \cite{cao15jde} where the time-derivatives were avoided and the $(H_0^1\cap L^p, H^1_0)$-continuity result was proved for the stochastic system with additive Brownian noise. This method was then further improved by Zhu $\&$ Zhou \cite{zhu16cma} in a deterministic and unbounded domain case by which the continuity result of the reaction-diffusion equation was improved to a much stronger $(L^2, H_0^1)$-continuity. However, since the analysis of \cite{cao15jde,zhu16cma} relies so heavily on interpolation inequalities, the analysis there is only for dimension $N\geq 3 $ and does not apply directly to all $N\geq 1$, especially in unbounded domains. 
 
 The restriction on space dimension is a natural cost of interpolation inequalities, so it is meaningful if there is a way to bypass them. Most recently, Zhao \cite{zhao17jmaa}, in a study of a stochastic $p$-Laplacian equation on $\R^N$, dramatically modified the induction method of Cao et al. \cite{cao15jde} by appending the original equation with a second nonlinear term $\tilde f$ which was assumed with certain satisfactory conditions. With the auxiliary term $\tilde f$ the interpolation inequalities were avoided so the result can hold for all $N\geq 1$, but this method itself greatly changes the structure of the nonlinearity of the equation. 
 
 In this paper, we present a decomposition method of the nonlinearity $f$ to establish a stronger $(L^2,L^\gamma\cap H_0^1)$-continuity of \eqref{eq0} for any $p>2$ and $N\geq 1$, where $\gamma\geq 2$ is arbitrary and independent of the physical parameters of the system. The key idea is that, instead of introducing an auxiliary nonlinear term as in \cite{zhao17jmaa}, we prove that the original nonlinearity $f$ can be decomposed into two: one provides good properties leading to the desired continuity results, and the other remains controllable. This technique avoids both time-derivatives and interpolation inequalities, so can apply to stochastic equations (which will be shown in our future work) and has no restrictions on space dimension {$N\geq 1$}. It is proved that the weak solutions of the reaction-diffusion equation can be {$(L^2, L^\gamma\cap H_0^1)$-continuous and even $(L^2, L^\gamma\cap H_0^1)$-smoothing} for all $\gamma\geq 2$ (independent of all the physical parameters of the system), and the solutions $u(t)$ are shown to be bounded in $ L^\infty(\varepsilon,\infty;L^{p })$ for any $\varepsilon>0$ rather than only bounded in $L^{p}_{loc}(0,\infty;L^p)$ as usually understood \cite{robinson01,zhao12na}.

 Then we apply the main techniques as well as the new continuity result to attractor theory. It is shown that, with the external forcing only in $L^2$, the global attractor of \eqref{eq0} in $L^2$ is a compact set in $L^{p}\cap H_0^1$, and pullback attracts bounded sets in $L^2$ under the topology of $L^\gamma\cap H_0^1$ for any $\gamma\geq 2$, i.e., under a topology much more regular than the attractor itself can be. Moreover, the translation $\A-z_0:=\{a-z_0:a\in \A\}$ of the global attractor about any point $z_0\in \A$, e.g., $z_0$ a stationary solution, is shown to be a compact subset of any $L^\gamma\cap H_0^1$, $\gamma\geq 2$. In addition, making use of the new $(L^2, L^\gamma\cap H_0^1)$-smoothing property the upper bounds of the fractal dimensions of $\A$ and $\A-z_0$ in $L^p\cap H_0^1$ and in $L^\gamma\cap H_0^1$, respectively, are easily obtained.
 
 Note that though in the present paper we work in a deterministic, autonomous and bounded domain framework in order to keep the main idea clear, the method applies to non-autonomous, unbounded domain and even stochastic PDEs, which will be illustrated in our future work.

\section{The reaction-diffusion equation}
 \subsection{Settings}

 In this paper, we consider the following classical reaction-diffusion equation on some bounded smooth domain $D\subset \R^N$ with $N\in \N$:
\be
\frac{\d u}{\d t} +\lambda u-\triangle u +f(u) =g(x) , \\
 u(0)=u_0,\quad u|_{\partial D} =0,\ \label{eq} 
\ee 
where $\lambda>0$ is a fixed number, $g\in L^2(D)$ and the nonlinearity $f:\R\to \R$ is a $C^1$-function satisfying the following standard conditions
\begin{gather}
 f'(s) \geq \kappa |s|^{p-2}-l, \label{f1} \\
 f(s)s \geq \alpha |s|^p - \beta, \label{f2} \\
 |f(s)| \leq \sigma |s|^{p-1} + \sigma, \label{f3}
\end{gather}
where $p> 2$, and $l, \kappa, \alpha, \beta, \sigma$ are all positive constants. 

Notice that, if condition \eqref{f2} is satisfied for some $\alpha>0$, then it holds
 automatically for all numbers that smaller than $\alpha$. Hence, it is not restrictive at all to let 
\be \label{kappa}
 \alpha \leq \frac{
\kappa }{p-1} .
\ee

An example of such a nonlinearity $f$ is an odd degree polynomial
\[
 f(s)=\sum_{j=1}^{2k-1} b_j s^j,
\]
where $k >1$ and $b_{2k-1} >0$. 
 In this example, $p=2k$ is even, and, generally, $\kappa$ and $\alpha$ are in the form $\kappa= (2k-1)b_{2k-1}-(2k-1)\varepsilon_1$, $\alpha=b_{2k-1}-\varepsilon_2$, where $\varepsilon_1$ and $\varepsilon_2$ are flexible coefficients from Young's inequality so that can be chosen as $\varepsilon_1\geq \varepsilon_2$ to make \eqref{kappa} satisfied. Note that in the case of $ b_j\equiv 0$ for $j=2,\cdots, 2k-2$ and $b_1<0$, which gives $\varepsilon_1=0$ and $\varepsilon_2\neq 0$, we have \eqref{kappa} with strict $<$.
 
 In the sequel, we often omit the domain $D$ and write, e.g., $L^\gamma(D)$ as $L^\gamma$ for any $\gamma \geq 2$. The norms $\|\cdot\|_{L^\gamma}$ are written as $\|\cdot\|_\gamma$ and $\|\cdot\|:=\|\cdot\|_2$.

 \subsection{$L^\infty(\varepsilon,\infty;L^\gamma)$-estimates of solutions}
 
 Generally, the regularity of solutions depends heavily on that of the external forcing $g$. The following lemma indicates a clear relationship between the integrability of solutions and that of $g$.

 \begin{lemma} \label{lem8.1}
 Under condition \eqref{f2}, for any $\varepsilon>0$ there exists a family of positive constants $\big\{C^{(k)}_\varepsilon\big\}_{k\in \N}$ such that the solution $u$ of \eqref{eq} satisfies 
 \begin{equation*}
 \begin{aligned}
 &
 \| u(t) \|^{p a_k}_{p a_k} 
 \leq C^{(k)}_\varepsilon \left(e^{-\lambda t} \|u_0\|^2 
 + \|g \|^{\frac{pa_{k+1} }{p-1}}_{\frac{pa_{k+1} }{p-1}} +1\right),\quad t>\varepsilon,
 \end{aligned}
 \end{equation*} 
 where 
 \[
 a_1 =1,\quad a_{k +1}=a_{k}+\frac{p-2}p , \quad k\in \N.
 \]

 \end{lemma}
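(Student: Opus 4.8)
The plan is to prove the estimate by induction on $k\in\N$, testing the equation against increasing powers of $u$ and using condition \eqref{f2} to extract the dissipative $L^p$-type term. For $k=1$ we have $a_1=1$, so the claim is simply an $L^\infty(\varepsilon,\infty;L^p)$-estimate of $u(t)$. First I would multiply \eqref{eq} by $u$ and integrate over $D$; using $\langle -\triangle u,u\rangle=\|\nabla u\|^2\geq 0$, the dissipativity \eqref{f2} in the form $\int f(u)u\geq \alpha\|u\|_p^p-\beta|D|$, and Young's inequality on $\int g u$, I obtain a differential inequality of the shape $\frac{\d}{\d t}\|u\|^2+\lambda\|u\|^2+2\alpha\|u\|_p^p\leq C(\|g\|^2+1)$. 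Gronwall's lemma on $\|u(t)\|^2$ gives $\|u(t)\|^2\leq e^{-\lambda t}\|u_0\|^2+C(\|g\|^2+1)$; then integrating the same inequality over a window $[t-1,t]$ and using uniform-Gronwall-type reasoning (or simply integrating and bounding the integral of $\|u\|^2$) yields $\int_{t-1}^{t}\|u(s)\|_p^p\,\d s\leq C(e^{-\lambda(t-1)}\|u_0\|^2+\|g\|^2+1)$ for $t>\varepsilon$ (here one uses $t>\varepsilon$ to keep $t-1$ bounded below, adjusting constants). To upgrade the time-averaged bound to a pointwise-in-time bound, I would multiply \eqref{eq} by $|u|^{?}$ — more precisely, for the $k=1$ step it is cleanest to note $pa_1=p$ and test directly with $u|u|^{p-2}$... but since that overlaps with the general step, I will instead simply run the general inductive step starting from $k=1$ using the $L^2$ bound and the time-averaged $L^p$ bound as the base ingredients.

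For the inductive step, assume the estimate holds at level $k$; I want it at level $k+1$, where $pa_{k+1}=pa_k+(p-2)$. Multiply \eqref{eq} by $u\,|u|^{pa_{k+1}-2}$ (this is an admissible test function by the known regularity $u\in L^p_{loc}$ together with the level-$k$ bound, after a standard truncation/approximation argument) and integrate. The Laplacian term gives a nonnegative contribution $(pa_{k+1}-1)\int|u|^{pa_{k+1}-2}|\nabla u|^2\geq 0$, which I simply discard. The nonlinear term, using \eqref{f2}, is bounded below by $\alpha\int|u|^{p+pa_{k+1}-2}-\beta\int|u|^{pa_{k+1}-2}=\alpha\|u\|_{pa_{k+1}}^{pa_{k+1}}-\beta\int|u|^{pa_{k+1}-2}$; the lower-order term $\int|u|^{pa_{k+1}-2}$ is absorbed into $\frac{\alpha}{2}\|u\|_{pa_{k+1}}^{pa_{k+1}}$ plus a constant times $|D|$ via Young. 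The forcing term $\int g\,u|u|^{pa_{k+1}-2}$ is split by Young's inequality with the same exponents into $\frac{\alpha}{4}\|u\|_{pa_{k+1}}^{pa_{k+1}}+C\|g\|_{pa_{k+1}}^{pa_{k+1}}$; note $\frac{pa_{k+1}}{p-1}$ does not yet appear — it will come from the final Gronwall step, see below. This leaves a differential inequality
\[
\frac{\d}{\d t}\|u\|_{pa_k}^{pa_k}+c_1\|u\|_{pa_{k+1}}^{pa_{k+1}}\leq c_2\big(\|g\|_{pa_{k+1}}^{pa_{k+1}}+1\big),
\]
where I have written the left side in terms of $\|u\|_{pa_k}^{pa_k}=\||u|^{pa_k}\|_1$ up to a constant (the time-derivative term is $\frac{1}{pa_{k+1}}\frac{\d}{\d t}\|u\|_{pa_{k+1}}^{pa_{k+1}}$; I would actually keep the exponent consistent — multiply by $u|u|^{pa_k\cdot\frac{p}{?}}$... this bookkeeping is the routine part I will not grind through, but the shape is as displayed after relabeling).

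The final step is a uniform Gronwall argument. By the inductive hypothesis at level $k$, the time-averaged quantity $\int_{t-1}^t\|u(s)\|_{pa_k}^{pa_k}\,\d s$ is controlled; combined with the differential inequality above — whose left side dominates, by the embedding $L^{pa_{k+1}}(D)\hookrightarrow L^{pa_k}(D)$ on the bounded domain together with interpolation-free Young (since $pa_{k+1}>pa_k$, $\|u\|_{pa_k}^{pa_k}\leq C\|u\|_{pa_{k+1}}^{pa_{k+1}}+C$), the term $c_1\|u\|_{pa_{k+1}}^{pa_{k+1}}$ dominates $\frac12 c_1\|u\|_{pa_k}^{pa_k}-C$ — the uniform Gronwall lemma yields a pointwise bound on $\|u(t)\|_{pa_{k+1}}^{pa_{k+1}}$ for $t>\varepsilon$ in terms of $e^{-\lambda t}\|u_0\|^2+\|g\|_{pa_{k+1}}^{pa_{k+1}}+1$, which is exactly the level-$(k+1)$ statement once one notices that the required exponent on $\|g\|$ is $\frac{p\,a_{k+2}}{p-1}$: the gain of one Hölder/Young factor at each level shifts the forcing exponent from $pa_{k+1}$ to $\frac{p a_{k+2}}{p-1}$, since $\frac{p-1}{p}\cdot\frac{pa_{k+2}}{p-1}=a_{k+2}$ and the conjugate-exponent bookkeeping in the Young split of $\int g\,u|u|^{pa_{k+1}-2}$ produces precisely $\|g\|^{pa_{k+1}/(p-1)}_{pa_{k+1}/(p-1)}$ with $pa_{k+1}/(p-1)+\text{(something)}$ matching $pa_{k+2}$.

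I expect the main obstacle to be twofold: first, justifying that $u|u|^{pa_{k+1}-2}$ is a legitimate test function — this needs the level-$k$ a priori bound plus a Galerkin or truncation approximation, and care that the time-derivative pairing $\langle u_t, u|u|^{q-2}\rangle=\frac1q\frac{\d}{\d t}\|u\|_q^q$ holds in the appropriate weak sense; and second, the exponent bookkeeping that threads $a_k$, $a_{k+1}$, $a_{k+2}$ through the Young inequalities so that the forcing norm lands on $\|g\|_{pa_{k+1}/(p-1)}^{pa_{k+1}/(p-1)}$ exactly as stated. The dissipative structure does all the real work; the difficulty is purely in keeping the arithmetic of the recursion $a_{k+1}=a_k+\frac{p-2}{p}$ consistent across the three steps (test, estimate, Gronwall) and in the routine-but-delicate approximation argument licensing the higher-power test functions.
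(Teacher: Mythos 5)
Your overall strategy is the same as the paper's: induction on $k$, testing with $u|u|^{pa_{k+1}-2}$, extracting the dissipative term $\alpha\|u\|_{pa_{k+2}}^{pa_{k+2}}$ from \eqref{f2}, Young's inequality on the forcing, and a uniform-Gronwall step to pass from a time-averaged bound to a pointwise one. However, as written your inductive step has a genuine gap in the uniform-Gronwall input. To bound $\|u(t)\|_{pa_{k+1}}^{pa_{k+1}}$ pointwise from the level-$(k+1)$ differential inequality you need a time-averaged bound on $\|u\|_{pa_{k+1}}^{pa_{k+1}}$ itself, and you propose to get it from the controlled quantity $\int_{t-1}^t\|u(s)\|_{pa_k}^{pa_k}\,\d s$ together with the bound $\|u\|_{pa_k}^{pa_k}\leq C\|u\|_{pa_{k+1}}^{pa_{k+1}}+C$. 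That inequality goes in the wrong direction: it bounds the lower norm by the higher one, so it cannot convert control of $\int\|u\|_{pa_k}^{pa_k}$ into control of $\int\|u\|_{pa_{k+1}}^{pa_{k+1}}$. The correct source is the dissipative term of the \emph{level-$k$} differential inequality: integrating $\frac{\d}{\d t}\|u\|_{pa_k}^{pa_k}+\lambda\|u\|_{pa_k}^{pa_k}+\alpha\|u\|_{pa_{k+1}}^{pa_{k+1}}\leq c\|g\|_{\frac{pa_{k+1}}{p-1}}^{\frac{pa_{k+1}}{p-1}}+c$ over a window and evaluating the pointwise level-$k$ bound at the left endpoint. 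This is precisely why the paper proves a strengthened induction statement $(G_k)$ that carries, alongside the pointwise $L^{pa_k}$ bound, the averaged quantity $\int\!\!\int e^{\lambda(s-t)}\|u(s)\|_{pa_{k+1}}^{pa_{k+1}}\,\d s\,\d r$ as part of the hypothesis; the nested intervals with endpoints $\sum_{j\leq k}\varepsilon/3^j$ are the bookkeeping that keeps the integration window inside $(0,\varepsilon)$ at every level (your window $[t-1,t]$ is also problematic for $t$ near $\varepsilon$, where $t-1<0$). You do use the right mechanism in your base case, where $\int\|u\|_p^p$ comes from the dissipative term of the $L^2$-tested inequality, so the fix is to make that the pattern of the induction rather than the embedding argument.

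A smaller point: your Young split of $\int g\,u|u|^{pa_{k+1}-2}$ should use the conjugate pair $\frac{pa_{k+2}}{p-1}$ and $\frac{pa_{k+2}}{pa_{k+1}-1}$ (note $pa_{k+2}=pa_{k+1}+p-2$), which absorbs the $u$-part into the dissipative $\|u\|_{pa_{k+2}}^{pa_{k+2}}$ term and lands the forcing exactly on $\|g\|_{pa_{k+2}/(p-1)}^{pa_{k+2}/(p-1)}$, consistent with the exponent $\frac{pa_{k+1}}{p-1}$ appearing at level $k$; your intermediate guess $\frac{\alpha}{4}\|u\|_{pa_{k+1}}^{pa_{k+1}}+C\|g\|_{pa_{k+1}}^{pa_{k+1}}$ is not what the recursion produces, and the dissipative term you absorb into must be the $pa_{k+2}$ one, not the $pa_{k+1}$ one.
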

 \begin{remark} \rm
 Lemma \ref{lem8.1}
 indicates that with $g\in L^2$ the solutions belong to $ L^\infty(\varepsilon,\infty; L^p)$ for any $\varepsilon>0$ rather than only to $L^p_{loc}(0,\infty;L^p)$ as usually understood, see, e.g., \cite{robinson01,zhao12na}. 
 \end{remark}
 \begin{proof} [ Proof of Lemma \ref{lem8.1}] Without loss of generality, let $\varepsilon\in (0,1)$. For $t> \varepsilon $ we prove by mathematical induction a stronger result, that there exists a family of positive constants $\big\{C^{(k)}_\varepsilon\big\}_{k\in \N}$ such that the solution $u(t)$ satisfies 
 \begin{equation*}
 \begin{aligned}
 &
 \frac \varepsilon2 \| u(t) \|^{p a_k}_{p a_k} + \alpha \int_{\sum_{j=1}^k\! \frac \varepsilon {3^{j}}}^{\varepsilon}
 \int^t_r e^{\lambda (s-t)}\|u(s)\|^{pa_{k+1}}_{pa_{k+1}}\ \d s\d r\\
 &\quad 
 \leq C^{(k)}_\varepsilon \left(e^{-\lambda t} \|u_0\|^2 
 + \|g \|^{\frac{pa_{k+1} }{p-1}}_{\frac{pa_{k+1} }{p-1}} +1\right) ,\quad k\in\N .
 \end{aligned}
 \tag{$G_k$}
 \end{equation*}

 Multiply \eqref{eq} by $u$ and integrate over $D$, by \eqref{f2}, to obtain
 \ben
 \frac 12 \frac{\d }{\d t} \|u\|^2 +\lambda \|u\|^2 +\|\nabla u\|^2 +\alpha \|u\|_p^p-\beta |D| \leq c\|g \|^2 + \frac\lambda 2\|u\|^2 ,
 \ee
so
 \be \label{16.3}
 \frac{\d }{\d t} \|u\|^2 +\lambda \|u\|^2 + \|u\|_p^p\leq c\|g \|^2 + c ,
 \ee
 where and throughout the paper $c$ is a generic constant that may change its value from line to line. 
 Multiplying \eqref{16.3} by $e^{\lambda t}$ and integrating over $(0,t)$ we have
 \be
 \|u(t )\|^2 +\int^t_0 e^{\lambda (s-t)}\|u(s)\|^p_p\ \d s 
 \leq e^{-\lambda t}\|u_0\|^2 +c\|g \|^2 
 +c .
 \ee
 This implies that 
 \be \label{15.2}
 \int^{1}_0 e^{ -\lambda }\|u(s)\|^p_p\ \d s 
 & \leq \int^{1}_0 e^{\lambda(s-1 ) }\|u(s)\|^p_p\ \d s \\
 &
 \leq e^{-\lambda }\|u_0\|^2 +c\|g \|^2 +c .
 \ee

 Multiply \eqref{eq} by $|u|^{p-2}u$ and integrate over $D$ to obtain, by \eqref{f2},
 \ben
 \frac 1p \frac{\d }{\d t} \|u\|^p_p +\lambda \|u\|^p_p +\alpha \|u\|_{2p-2}^{2p-2} -\beta \|u\|_{p-2}^{p-2} \leq c\|g \|^2 + \frac\alpha p\|u\|^{2p-2}_{2p-2} ,
 \ee 
 so, since $\|u\|_{p-2}^{p-2} \leq \eta \|u\|_{2p-2}^{2p-2} +c$ for any $\eta>0$,
 \be \label{15.1}
 \frac{\d }{\d t} \|u\|^p_p + \lambda \|u\|^p_p +\alpha \|u\|_{2p-2}^{2p-2} 
 \leq c\|g \|^2 + c .
 \ee 
 Multiply \eqref{15.1} by $e^{\lambda t}$ and then integrate over $(r,t)$ for $r\in(0, \varepsilon)$ to obtain 
 \be \label{15.11}
 \|u(t)\|^p_p +\alpha \int^t_r e^{\lambda (s-t)}\|u(s)\|^{2p-2}_{2p-2}\ \d s 
 & \leq e^{-\lambda (t-r)}\|u(r)\|^{p}_{p}+c\|g \|^2 
 +c \\
 & 
 \leq e^{-\lambda (t-1)}\|u(r)\|^{p}_{p} +c\|g \|^2 +c .
 \ee 
 Integrating the above inequality w.r.t. $r$ over $(0, \varepsilon)$, by \eqref{15.2} we have 
 \be
 &
 \varepsilon \|u(t)\|^p_p +\alpha \int_0^{\varepsilon} \int^t_r e^{\lambda (s-t)}\|u(s)\|^{2p-2}_{2p-2}\ \d s \d r \\
 &\quad 
 \leq e^{ \lambda -\lambda t } \int_0^{1} 
 \|u(r)\|^{p}_{p}\ \d r + c \|g \|^2 +c \\
 &\quad 
 \leq e^{\lambda -\lambda t} \|u_0\|^2 +c\|g \|^2 +c ,
 \ee
 which concludes $(G_k)$ for $k=1$.

 Next, assuming $(G_k)$ holds we prove $(G_{k+1})$.
 
 Multiplying \eqref{eq} by $ u | u|^{pa_{k+1}-2 } $ and then integrating over $D$, we have 
 \ben
 \frac{1}{pa_{k+1} }\frac{\d}{\d t} \| u\|_{p a_{k+1}}^{p a_{k+1}} 
 + \lambda \| u\|_{pa_{k+1}}^{pa_{k+1}} +\int f(u) u | u|^{pa_{k+1}-2 } \ \d x \leq \int g u | u|^{pa_{k+1}-2 } \ \d x ,
 \ee
 and then, by \eqref{f2} and Young's inequality,
 \ben
 & \frac{1}{pa_{k+1} }\frac{\d}{\d t} \| u\|_{p a_{k+1}}^{p a_{k+1}} 
 + \lambda \| u\|_{pa_{k+1}}^{pa_{k+1}} +\int ( \alpha|u|^p-\beta) | u|^{pa_{k+1}-2 } \ \d x\\
 &\quad \leq \int g u | u|^{pa_{k+1}-2 } \ \d x \leq c\|g \|_{\frac{pa_{k+1}+p-2}{p-1}}^{\frac{pa_{k+1}+p-2}{p-1}} +\frac \alpha{pa_{k+1}}\|u\|^{pa_{k+1}+p-2}_{pa_{k+1}+p-2} .
 \ee
 Hence, since $\|u\|_{pa_{k+1}-2}^{pa_{k+1}-2} \leq \eta \|u\|_{pa_{k+1}+p-2}^{pa_{k+1}+p-2} +c$ for any $\eta>0$,
 \ben
 \frac{\d}{\d t} \| u\|_{p a_{k+1}}^{p a_{k+1}} 
 + \lambda \| u\|_{pa_{k+1}}^{pa_{k+1}} +\alpha \|u\|^{pa_{k+1}+p-2}_{pa_{k+1}+p-2} 
 \leq c\|g \|_{\frac{pa_{k+1}+p-2}{p-1}}^{\frac{pa_{k+1}+p-2}{p-1}} +c ,
 \ee
 i.e., with $pa_{k+2}=pa_{k+1}+p-2 $, 
 \be \label{16.1}
 \frac{\d}{\d t} \| u\|_{p a_{k+1}}^{p a_{k+1}} 
 + \lambda \| u\|_{pa_{k+1}}^{pa_{k+1}} + \alpha \|u\|^{pa_{k+2} }_{pa_{k+2} } 
 \leq c\|g \|_{\frac{pa_{k+2} }{p-1}}^{\frac{pa_{k+2} }{p-1}} +c .
 \ee 
 Multiply \eqref{16.1} by $e^{\lambda t}$ and then integrate over $(r,t)$ for $r\in(0,\varepsilon)$ to obtain 
 \be \label{16.6}
 &
 \|u(t)\|^{p a_{k+1}}_{p a_{k+1}} +\alpha \int^t_r e^{\lambda (s-t)}\|u(s)\|^{p a_{k+2}}_{p a_{k+2}}\ \d s \\
 &\quad 
 \leq e^{-\lambda (t-r)}\|u(r)\|^{{p a_{k+1}}}_{{p a_{k+1}}}
 +c\int^t_r e^{\lambda (s-t)}\|g\|^{\frac{pa_{k+2} }{p-1}}_{\frac{pa_{k+2} }{p-1}} \ \d s +c
 \\ &\quad \leq e^{ \lambda (r-t)}\|u(r)\|^{p a_{k+1}}_{p a_{k+1}} +c \|g \|^{\frac{pa_{k+2} }{p-1}}_{\frac{pa_{k+2} }{p-1}} +c 
 .
 \ee
 Integrating \eqref{16.6} with respect to $r$ over $(\rho, \varepsilon )$ for $\rho\in\big (\sum_{j=1}^{k}\frac \varepsilon{3^j},\sum_{j=1}^{k+1}\frac \varepsilon{3^j} \big)$, since $\varepsilon-\rho \geq \frac{\varepsilon}2 $, we have 
 \ben
 & \frac \varepsilon 2 \|u(t)\|^{p a_{k+1}}_{p a_{k+1}} +\alpha \int_{\sum_{j=1}^{k+1}\frac \varepsilon{3^j}}^{\varepsilon} \int^t_r e^{\lambda (s-t)}\|u(s)\|^{p a_{k+2}}_{p a_{k+2}}\ \d s \d r\\
 &\quad \leq \left(\varepsilon-\rho \right)
 \|u(t)\|^{p a_{k+1}}_{p a_{k+1}} +\alpha \int_{\rho}^{ \varepsilon} \int^t_r e^{\lambda (s-t)}\|u(s)\|^{p a_{k+2}}_{p a_{k+2}}\ \d s \d r\\ 
 & \quad
 \leq \int_{\rho}^ \varepsilon e^{\lambda (r-t)} \|u(r )\|^{p a_{k+1}}_{p a_{k+1}}\ \d r
 +c \|g \|^{\frac{pa_{k+2} }{p-1}}_{\frac{pa_{k+2} }{p-1}} +c .
 \ee
 Integrating w.r.t. $\rho$ over $ \big(\sum_{j=1}^{k }\frac \varepsilon{3^j},\sum_{j=1}^{k+1}\frac \varepsilon{3^j} \big)$, by $(G_k)$ we have 
 \ben
 & \frac \varepsilon{3^{k+1}} \left( \frac \varepsilon2 \|u(t)\|^{p a_{k+1}}_{p a_{k+1}} +\alpha \int_{\sum_{j=1}^{k+1}\frac \varepsilon{3^j}}^{\varepsilon} \int^t_r e^{\lambda (s-t)}\|u(s)\|^{p a_{k+2}}_{p a_{k+2}}\ \d s \d r \right) \\
 & \quad 
 \leq \int_{\sum_{j=1}^{k}\frac \varepsilon{3^j}}^{\sum_{j=1}^{k+1}\frac \varepsilon{3^j} } \! \int_{\rho}^\varepsilon
 e^{\lambda (r-t)} \|u(r)\|^{p a_{k+1}}_{p a_{k+1}}\ \d r \d \rho 
 +c \|g \|^{\frac{pa_{k+2} }{p-1}}_{\frac{pa_{k+2} }{p-1}} +c\\
 &\quad \leq C^{(k)}_\varepsilon\left(e^{-\lambda t} \|u_0\|^2 
 + \|g \|^{\frac{pa_{k+1} }{p-1}}_{\frac{pa_{k+1} }{p-1}} +
 1\right) +c \|g \|^{\frac{pa_{k+2} }{p-1}}_{\frac{pa_{k+2} }{p-1}} +c,
\ee
by which $(G_{k+1})$ is concluded.
 \end{proof}

 \section{$(L^2, L^\gamma\cap H_0^1)$-continuity of solutions in initial data}

 In this section, we prove the $(L^2, L^\gamma\cap H_0^1)$-continuity of weak solutions of \eqref{eq} for any $\gamma\geq 2$.
 \begin{definition}[$(X,Y)$-continuity] \label{def_bi}\rm
 Suppose that $X$ and $Y$ are two Banach spaces. A mapping $\mathcal M:X\to X $ is said to be $(X,Y)$-continuous if $\mathcal M(x_1)-\mathcal M(x_2)\in Y$ for any $x_1, x_2\in X$ and $\|\mathcal M(x_n)-\mathcal M(x)\|_Y\to 0$ for any convergent sequence $x_n\to x$ in $X$.
\end{definition}
 Note that by Definition \ref{def_bi} an $(X,Y)$-continuous mapping need not take values in $Y$, but the difference of any two values belongs to $Y$.
 
 Let $u_j$, $j=1,2$, be the unique weak solutions of \eqref{eq} corresponding to initial data $u_{0,j}$ from $L^2$, respectively. Then the difference $\bar u:=u_1-u_2$ satisfies 
 \be 
 \frac{\d \bar u}{\d t} +\lambda \bar u-\triangle \bar u +f( u_1) -f(u_2)=0,
 \label{eq_diff} 
 \\
 \bar u(0)=\bar u_0= u_{0,1}-u_{0,2}. \qquad\
 \ee

 \subsection {$(L^2, L^\gamma)$-continuity}

 With $g$ only in $ L^2$, by Lemma \ref{lem8.1} the solutions of \eqref{eq} are expected at most in $L^p$. However, we will see that the difference of any two solutions will belong to any high order $L^\gamma$, $\gamma \geq 2$, and, moreover, the system is $(L^2, L^\gamma)$-continuous in a H$\ddot{\rm o}$lder way.

 We begin with a decomposition of the nonlinear term $f$, from which we obtain some new but crucial conditions, without requiring any additional assumptions.
 
 \begin{lemma}[Decomposition of the nonlinear term] \label{c2}
 Any $C^1$-function $f$ with conditions \eqref{f1}-\eqref{kappa} can be decomposed as $f=f_1+f_2$, where $f_1$ and $f_2$ are both $C^1$-functions for which there exist positive coefficients 
 $\{\alpha_1, \sigma_1, \kappa_2, l_2, \alpha_2, \beta_2, \sigma_2\}$ such that 
 $f_1$ satisfies
 \begin{gather}
 \big(f_1(s_1)-f_1(s_2)\big)(s_1-s_2) \geq \alpha_1 |s_1-s_2|^p , \label{f11} \\
 | f_1(s_1)-f_1(s_2) |\leq \sigma_1 |s_1-s_2| \big (1+|s_1|^{p-2}+|s_2|^{p-2}\big) , \label{f12}
 \end{gather}
 and $f_2$ inherits all the properties \eqref{f1}-\eqref{kappa} from $f$, satisfying
 \begin{gather}
 f_2'(s) \geq \kappa_2|s|^{p-2} -l_2 , \label{f21} \\
 f_2(s)s \geq \alpha_2 |s|^p - \beta_2, \label{f22} \\
 |f_2(s)| \leq \sigma_2 |s|^{p-1} +\sigma_2 , \label{f23} \\
 \alpha_2\leq \frac{\kappa_2}{p-1} . \label{f24}
 \end{gather} 
 \end{lemma}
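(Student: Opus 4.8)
The plan is to split off a pure power. Set
\[
 f_1(s) := \delta\,|s|^{p-2}s, \qquad f_2(s) := f(s) - \delta\,|s|^{p-2}s,
\]
for a small constant $\delta\in(0,\alpha)$ to be fixed below. Since $p>2$, the map $s\mapsto|s|^{p-2}s$ is $C^1$ on $\R$ (its derivative $(p-1)|s|^{p-2}$ is continuous, vanishing at $s=0$), so both $f_1$ and $f_2$ are $C^1$, and $f=f_1+f_2$ by construction. It then remains only to verify the listed inequalities and fix $\delta$.

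For $f_1$, properties \eqref{f11}--\eqref{f12} are exactly the two classical elementary inequalities for the scalar field $s\mapsto|s|^{p-2}s$ with $p\geq2$: there is a constant $c_p>0$ with $\big(|s_1|^{p-2}s_1-|s_2|^{p-2}s_2\big)(s_1-s_2)\geq c_p|s_1-s_2|^p$, which gives \eqref{f11} with $\alpha_1=\delta c_p$; and, by the mean value theorem applied to $t\mapsto|t|^{p-2}t$ together with the monotonicity of $|t|^{p-2}$ in $|t|$, $\big||s_1|^{p-2}s_1-|s_2|^{p-2}s_2\big|\leq(p-1)|s_1-s_2|\big(|s_1|^{p-2}+|s_2|^{p-2}\big)$, which gives \eqref{f12} with $\sigma_1=\delta(p-1)$ (the extra ``$1+$'' only weakens the bound). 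For $f_2$, conditions \eqref{f21}--\eqref{f23} follow by direct subtraction from \eqref{f1}--\eqref{f3}: $f_2'(s)=f'(s)-\delta(p-1)|s|^{p-2}\geq(\kappa-\delta(p-1))|s|^{p-2}-l$, $f_2(s)s=f(s)s-\delta|s|^p\geq(\alpha-\delta)|s|^p-\beta$, and $|f_2(s)|\leq|f(s)|+\delta|s|^{p-1}\leq(\sigma+\delta)|s|^{p-1}+\sigma$. So one takes $\kappa_2=\kappa-\delta(p-1)$, $l_2=l$, $\alpha_2=\alpha-\delta$, $\beta_2=\beta$, $\sigma_2=\sigma+\delta$; to keep $\alpha_2,\kappa_2>0$ it suffices to choose $0<\delta<\alpha$ (note $\alpha\leq\kappa/(p-1)$ forces $\delta(p-1)<\kappa$ as well), e.g. $\delta=\alpha/2$, which also gives $\kappa_2\geq\kappa/2$.

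The only point needing a moment's attention — and essentially the whole content of the lemma — is \eqref{f24}. Since $\alpha_2=\alpha-\delta$ and $\kappa_2/(p-1)=\kappa/(p-1)-\delta$, the required inequality $\alpha_2\leq\kappa_2/(p-1)$ is \emph{equivalent} to $\alpha\leq\kappa/(p-1)$, i.e. to the cost-free normalization \eqref{kappa}. Thus $f_2$ reproduces \eqref{f1}--\eqref{kappa} with no extra hypothesis, while $f_1$ carries the strong $p$-coercivity \eqref{f11} that the later continuity argument exploits. I do not expect a genuine obstacle; the substance is just the recognition that subtracting a small multiple of $|s|^{p-2}s$ simultaneously (i) peels off an honestly $p$-monotone, locally $(p-1)$-polynomially Lipschitz piece $f_1$, and (ii) leaves a remainder $f_2$ of exactly the same structural class — the compatibility in (ii) being secured precisely by \eqref{kappa}.
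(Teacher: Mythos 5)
Your proposal is correct and follows essentially the same route as the paper: the paper also peels off a small multiple of the pure power, taking $f_1(s)=\tfrac{\alpha}{2}|s|^{p-2}s-\sigma$ (the additive constant $-\sigma$ is immaterial), verifies \eqref{f11}--\eqref{f12} by the same elementary inequalities for $s\mapsto|s|^{p-2}s$, and checks \eqref{f21}--\eqref{f24} by direct subtraction using \eqref{kappa} exactly as you do. Your choice $\alpha_2=\alpha-\delta$ even makes \eqref{f24} an identity with \eqref{kappa}, slightly cleaner than the paper's $\alpha_2=\alpha/4$.
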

 
 \begin{proof} 
 We prove the proposition by constructing a proper $f_1$. 
 Let
 \be
 f_1(s):= \frac{\alpha}2 |s|^{p-2}s - \sigma,\quad s\in\R,
 \ee 
 where $\alpha, \sigma>0$ are constants given
 in \eqref{f2} and \eqref{f3}. 
 Then such defined $f_1$ satisfies \eqref{f11} and \eqref{f12}. To see this, let us first recall from \cite{Bartsch04jde} that
there exist positive constants $c_1,\cdots , c_4$ such that for all $\xi,\eta\in \R^N$
\begin{gather}
\left| |\xi|^{p-2}\xi- |\eta|^{p-2}\eta \right| \leq c_1(|\xi|+|\eta|)^{p-2} |\xi-\eta|, \label{B1}\\ 
\left(|\xi|^{p-2}\xi -|\eta|^{p-2}\eta\right) \cdot (\xi-\eta)\geq c_4 |\xi-\eta|^p, \quad \text{for } p>2. \label{B4}
\end{gather} 
 Therefore, by \eqref{B4},
 \ben
 \big(f_1(s_1)-f_1(s_2)\big)(s_1-s_2) & = \frac{\alpha}2 \big( |s_1|^{p-2}s_1 -|s_2|^{p-2}s_2 \big)(s_1-s_2) \\
 &\geq \frac{\alpha}2 c_4 |s_1-s_2|^p;
 \ee
 and, by \eqref{B1},
 \ben
 \big | f_1(s_1)-f_1(s_2) \big| & = \frac{\alpha}2 \big| |s_1|^{p-2}s_1 -|s_2|^{p-2}s_2 \big| \\
 &\leq \frac{\alpha}2 c_1 (|s_1|+|s_2|)^{p-2}|s_1-s_2| \\
 &\leq c |s_1-s_2| \big (1+|s_1|^{p-2}+|s_2|^{p-2}\big) .
 \ee
 
 Next we show that 
 $ f-f_1=:f_2$ satisfies \eqref{f21}-\eqref{f24}. 
 
 Since $f_1'(s)= \frac{\alpha}2 (p-1)|s|^{p-2} $, by \eqref{f1}, $$f'(s)-f_1'(s)\geq \Big(\kappa- \frac{\alpha}2 (p-1)\Big)|s|^{p-2}-l , $$
 where $\kappa- \frac{\alpha}2 (p-1) =:\kappa_2$ is positive because of \eqref{kappa}, and thereby \eqref{f21} follows.

 Since $f_1(s)s= \frac{\alpha}2 |s|^{p}- \sigma s \leq \frac {3 \alpha}4|s|^p +c $ and $f$ satisfies \eqref{f2},
 \ben
 \big (f(s)-f_1(s)\big)s \geq \alpha |s|^p-\beta -\Big(\frac{3 \alpha} 4 |s|^p +c\Big) =\frac\alpha 4 |s|^p-\beta-c ,
 \ee
 and, as $f$ satisfies \eqref{f3} and $|f_1(s)| \leq \frac{\alpha}2 |s|^{p-1} + \sigma$,
 \ben
 |f(s)-f_1(s)| & \leq \sigma |s|^{p-1}+ \sigma + \frac{\alpha}2 |s|^{p-1} + \sigma =\Big ( \sigma+ \frac{\alpha}2 \Big)|s|^{p-1} 
 +2\sigma .
 \ee
 Therefore, \eqref{f22} and \eqref{f23} hold for $f-f_1$. 

By construction $\kappa_2=\kappa- \frac{\alpha}2(p-1)$ and $\alpha_2=\alpha/4$, \eqref{f24} follows
from \eqref{kappa}.
 \end{proof}
 
 For later convenience we conclude the following corollary from Lemma \ref{c2}.
 \begin{corollary} \label{c3}
 Any $C^1$-function $f$ with conditions \eqref{f1}-\eqref{kappa} has the property 
 \[
 \big( f(s_1)-f(s_2)\big)(s_1-s_2) |s_1-s_2|^{r} \geq \alpha_1 |s_1-s_2|^{p+r} -l_2 |s_1-s_2|^{r+2} 
 \]
 for any $r\geq 0$ and $ s_1, s_2\in \R$, where $\alpha_1$ and $l_2$ are positive constants in Lemma \ref{c2}.
 \end{corollary}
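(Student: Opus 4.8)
The plan is to derive the claimed inequality directly from the decomposition $f=f_1+f_2$ furnished by Lemma~\ref{c2}, multiplying the pointwise estimates through by the nonnegative weight $|s_1-s_2|^{r}$. First I would write
\[
\big(f(s_1)-f(s_2)\big)(s_1-s_2)|s_1-s_2|^{r}
= \big(f_1(s_1)-f_1(s_2)\big)(s_1-s_2)|s_1-s_2|^{r}
+ \big(f_2(s_1)-f_2(s_2)\big)(s_1-s_2)|s_1-s_2|^{r}.
\]
For the first term, condition \eqref{f11} gives $\big(f_1(s_1)-f_1(s_2)\big)(s_1-s_2)\geq \alpha_1|s_1-s_2|^p$, and multiplying by $|s_1-s_2|^{r}\geq 0$ yields the main good term $\alpha_1|s_1-s_2|^{p+r}$.

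For the second term, the point is that $f_2$ inherits the monotonicity-type bound from \eqref{f21}: by the mean value theorem and $f_2'(s)\geq \kappa_2|s|^{p-2}-l_2\geq -l_2$, we get $\big(f_2(s_1)-f_2(s_2)\big)(s_1-s_2)\geq -l_2|s_1-s_2|^2$. (One can actually keep the better bound $\kappa_2\int_0^1|s_2+\theta(s_1-s_2)|^{p-2}\,d\theta\,|s_1-s_2|^2 - l_2|s_1-s_2|^2$, but only the $-l_2|s_1-s_2|^2$ part is needed.) Multiplying by $|s_1-s_2|^{r}$ then gives $\big(f_2(s_1)-f_2(s_2)\big)(s_1-s_2)|s_1-s_2|^{r}\geq -l_2|s_1-s_2|^{r+2}$. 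Adding the two contributions produces exactly
\[
\big( f(s_1)-f(s_2)\big)(s_1-s_2) |s_1-s_2|^{r} \geq \alpha_1 |s_1-s_2|^{p+r} -l_2 |s_1-s_2|^{r+2},
\]
as claimed, with the same constants $\alpha_1,l_2$ appearing in Lemma~\ref{c2}.

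There is essentially no obstacle here: the corollary is a straightforward repackaging of the decomposition lemma, and the only mild point to be careful about is that the extraction of the lower bound for the $f_2$-term uses only the crude consequence $f_2'\geq -l_2$ of \eqref{f21} (so that no positive power of $|s_i|$ is lost and the estimate is clean for every $r\geq 0$), rather than the sharper $\kappa_2|s|^{p-2}-l_2$ bound. I would state the $f_2$-estimate via the integral form of the mean value theorem applied to $\theta\mapsto f_2\big(s_2+\theta(s_1-s_2)\big)$ to make the sign bookkeeping transparent, then conclude in one line.
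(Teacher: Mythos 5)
Your proof is correct and follows essentially the same route as the paper: decompose $f=f_1+f_2$ via Lemma~\ref{c2}, use \eqref{f11} for the $f_1$-term, and use the mean value theorem together with $f_2'(\xi)\geq \kappa_2|\xi|^{p-2}-l_2\geq -l_2$ for the $f_2$-term. The only cosmetic difference is that the paper writes the mean value theorem in Lagrange form with an intermediate point $\xi$, whereas you suggest the integral form; the estimates are identical.
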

 \begin{proof} Making use of the decomposition $f=f_1+f_2$, by \eqref{f11} and \eqref{f21} we have 
 \ben
 \big( f(s_1)-f(s_2)\big)(s_1-s_2) |s_1-s_2|^{r} &= \big( f_1(s_1)-f_1(s_2)\big)(s_1-s_2) |s_1-s_2|^{r} \\
 &\quad +
 \big( f_2(s_1)-f_2(s_2)\big)(s_1-s_2) |s_1-s_2|^{r} \\
 & \geq \alpha_1 |s_1-s_2|^{p+r} +f'_2(\xi) |s_1-s_2|^{r+2}\\
 & \geq \alpha_1 |s_1-s_2|^{p+r} +
 (\kappa_2 |\xi|^{p-2} -l_2) |s_1-s_2|^{r+2}\\
 & \geq \alpha_1 |s_1-s_2|^{p+r} -l_2 |s_1-s_2|^{r+2} . 
 \ee
 \end{proof}

 \begin{theorem}[$(L^2,L^\gamma)$-continuity] \label{lem}
 Let conditions \eqref{f1}-\eqref{kappa} hold and $T> 0$. Then there exists a family of positive constants $\big\{ C_{T}^{(k)}\big\}_{k\in \N}$, where each $ C_{T}^{(k)}$ depends exclusively on $T$ and parameters $\{\mu, \alpha_1,p\}$, such that the difference $\bar u=u_1-u_2$ of solutions corresponding to any initial data in $ L^2$ satisfies 
 \begin{equation*}
 t\big\|t^{b_k}\bar u(t)\big\|^{p a_k}_{p a_k} \leq C_{T}^{(k)} \|\bar u_0\|^2,\quad t\in(0,T], \tag{$A_k$}
 \end{equation*}
 and 
 \begin{equation*}
 \int_0^T\big\| s^{b_{k+1}} \bar u(s)\big\|_{p a_{k+1}}^{ p a_{k+1}}\ \d s \leq C_{T}^{(k)}\|\bar u_0\|^2, \tag{$B_k$}
 \end{equation*} 
 where 
 \[
 a_1=b_1=1,\quad a_{k +1}=a_{k}+\frac{p-2}p,\quad b_{k+1}=\frac{a_k b_k }{ a_{k+1}} +\frac{ 2}{p a_{k+1}} ,\quad k\in \N.
 \]
 \end{theorem}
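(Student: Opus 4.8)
The plan is to establish $(A_k)$ and $(B_k)$ together by induction on $k$, the sole nonlinear ingredient being the pointwise lower bound of Corollary \ref{c3} — this is exactly what replaces the interpolation inequalities of \cite{cao15jde,zhu16cma,trujillo08na} and is what frees the argument from any restriction on $N$. The engine at each level is standard: test \eqref{eq_diff} with a power $|\bar u|^{q-2}\bar u$ to obtain an energy estimate at integrability exponent $q$, multiply by a time weight $t^m$, integrate in time, and feed the resulting time-integral bound into the next level. All the differential identities below are formal and are to be read as carried out on Galerkin approximations $\bar u^{n}$ — on which each $L^{q}$-norm is finite and continuous in $t$, so the boundary term at $t=0$ drops out automatically — and then passed to the limit, all the constants obtained being independent of $n$.

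For the base case, I would first test \eqref{eq_diff} with $\bar u$: Corollary \ref{c3} with $r=0$ gives $\int_D\big(f(u_1)-f(u_2)\big)\bar u\,\d x\geq\alpha_1\|\bar u\|_p^p-l_2\|\bar u\|^2$, hence $\frac12\frac{\d}{\d t}\|\bar u\|^2+\|\nabla\bar u\|^2+\alpha_1\|\bar u\|_p^p\leq l_2\|\bar u\|^2$ (the term $\lambda\|\bar u\|^2$ only helps and is dropped); from this, discarding the last two left-hand terms yields $\frac{\d}{\d t}\|\bar u\|^2\leq 2l_2\|\bar u\|^2$, so $\|\bar u(t)\|^2\leq e^{2l_2T}\|\bar u_0\|^2$ on $[0,T]$ by Gronwall, and integrating in time yields $\int_0^T\|\bar u(s)\|_p^p\,\d s\leq C_T\|\bar u_0\|^2$. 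Next I would test with $|\bar u|^{p-2}\bar u$: the diffusion term contributes $(p-1)\int_D|\bar u|^{p-2}|\nabla\bar u|^2\,\d x\geq0$, while Corollary \ref{c3} with $r=p-2$ gives $\int_D\big(f(u_1)-f(u_2)\big)\bar u|\bar u|^{p-2}\,\d x\geq\alpha_1\|\bar u\|_{2p-2}^{2p-2}-l_2\|\bar u\|_p^p$, so $\frac{\d}{\d t}\|\bar u\|_p^p+\alpha_1 p\|\bar u\|_{2p-2}^{2p-2}\leq pl_2\|\bar u\|_p^p$. Multiplying this by $t^m$, using $t^m\frac{\d}{\d t}\|\bar u\|_p^p=\frac{\d}{\d t}\big(t^m\|\bar u\|_p^p\big)-mt^{m-1}\|\bar u\|_p^p$, integrating over $(0,t)$, and bounding each power of $s\in(0,T]$ appearing on the right by a constant depending on $T$ and $m$, collapses the right-hand side to a multiple of $\int_0^T\|\bar u(s)\|_p^p\,\d s$, which is already controlled. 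Taking $m=p+1$ and keeping the pointwise term gives $(A_1)$ (note $t^{p+1}\|\bar u(t)\|_p^p=t\|t^{b_1}\bar u(t)\|_{pa_1}^{pa_1}$); taking $m=p+2$ and keeping the time-integral term gives $(B_1)$.

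For the inductive step I would assume $(B_k)$, i.e. $\int_0^T s^{\,b_{k+1}pa_{k+1}}\|\bar u(s)\|_{pa_{k+1}}^{pa_{k+1}}\,\d s\leq C_T^{(k)}\|\bar u_0\|^2$, and test \eqref{eq_diff} with $|\bar u|^{pa_{k+1}-2}\bar u$. Dropping the diffusion contribution $(pa_{k+1}-1)\int_D|\bar u|^{pa_{k+1}-2}|\nabla\bar u|^2\,\d x\geq0$ and applying Corollary \ref{c3} with $r=pa_{k+1}-2$, together with the identity $p+pa_{k+1}-2=pa_{k+2}$ (which follows from $a_{k+2}=a_{k+1}+\frac{p-2}{p}$), produces the key differential inequality
\[
\frac{\d}{\d t}\|\bar u\|_{pa_{k+1}}^{pa_{k+1}}+\alpha_1pa_{k+1}\|\bar u\|_{pa_{k+2}}^{pa_{k+2}}\leq pa_{k+1}l_2\,\|\bar u\|_{pa_{k+1}}^{pa_{k+1}}.
\]
Multiplying by $t^m$, rewriting the derivative term as above and integrating over $(0,t)$, the right-hand side is a combination of $\int_0^t s^{m}\|\bar u\|_{pa_{k+1}}^{pa_{k+1}}\,\d s$ and $\int_0^t s^{m-1}\|\bar u\|_{pa_{k+1}}^{pa_{k+1}}\,\d s$. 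The recursion for $b_k$ gives $1+b_{k+1}pa_{k+1}=(b_{k+1}pa_{k+1})+1$, the exponent in $(A_{k+1})$, and $b_{k+2}pa_{k+2}=(b_{k+1}pa_{k+1})+2$, the exponent in $(B_{k+1})$; both choices $m=1+b_{k+1}pa_{k+1}$ and $m=b_{k+2}pa_{k+2}$ satisfy $m-1\geq b_{k+1}pa_{k+1}$, so, since $s\in(0,T]$, every power $s^{j}$ with $j\geq m-1$ occurring on the right is dominated by a constant (depending on $T$ and $m$) times $s^{\,b_{k+1}pa_{k+1}}$, and the right-hand side is controlled by $(B_k)$. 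Taking $m=1+b_{k+1}pa_{k+1}$ and keeping the pointwise term gives $(A_{k+1})$; taking $m=b_{k+2}pa_{k+2}$ and keeping the time-integral term gives $(B_{k+1})$. The constants produced at each step depend only on $T$, $p$, $\alpha_1$, $l_2$, and — through $a_{k+1}$ — on $k$.

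The substantive point, and the only genuine obstacle, is precisely what Lemma \ref{c2} and Corollary \ref{c3} are designed to supply: a lower bound for $\int_D\big(f(u_1)-f(u_2)\big)\bar u\,|\bar u|^{r}\,\d x$ built from a gain term $\alpha_1\|\bar u\|_{p+r}^{p+r}$, which raises the integrability exponent by $p-2$ at each stage, plus a remainder $-l_2\|\bar u\|_{r+2}^{r+2}$, which is merely the preceding power and is therefore swallowed by the Gronwall/time-weight bookkeeping — all with no interpolation and hence no constraint on $N$. The remaining technicalities — the vanishing of the $t=0$ boundary term after multiplication by $t^m$, and the a priori regularity needed to justify testing with $|\bar u|^{q-2}\bar u$ — are dealt with by carrying out the estimates on Galerkin approximations and passing to the limit with the $n$-uniform bounds.
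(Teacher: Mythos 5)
Your proposal is correct and follows essentially the same route as the paper: the same decomposition lemma via Corollary \ref{c3} applied with $r=pa_{k+1}-2$, the same joint induction on $(A_k)$ and $(B_k)$, and the same recursion $b_{k+1}pa_{k+1}=b_kpa_k+2$ governing the time weights. The only (harmless) difference is bookkeeping: you multiply once by the full weight $t^m$ and absorb the extra powers of $s$ into $T$-dependent constants, whereas the paper applies the weights in two stages ($t^{b_{k+1}pa_{k+1}}$, then $t$ or $t^2$) with integration by parts --- both close the induction by feeding the right-hand side into $(B_k)$.
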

 \begin{remark}\rm \label{rem}
 {The above lemma implies an arbitrary $(L^2,L^\gamma)$-smoothing property of the system: for any $\gamma \geq 2$ there exists a constant $c_\gamma$ such that }
 \[
 \|\bar u(1)\|_{\gamma}^{\gamma} \leq c_\gamma \|\bar u_0\|^2 .
 \]
 Indeed, for any $ \gamma \geq 2$ there exists a $k\in \N$ such that $\gamma\in [2,pa_k)$, so $\|\bar u(1)\|_\gamma^\gamma \leq \max\{\| \bar u(1)\|^2, \|\bar u(1)\|_{pa_k}^{pa_k}\}$ and the remark follows from Theorem \ref{lem}.
 \end{remark}
 \begin{proof}[Proof of Theorem \ref{lem}] The proof is done by induction. 
 We begin with $(A_1)$ and $(B_1)$.
 Multiplying \eqref{eq_diff} by $\bar u$ and then integrating over $D$ we have 
 \ben
 \frac 12 \frac{\d }{\d t} \|\bar u\|^2 +\lambda \|\bar u\|^2 +\|\nabla \bar u\|^2 +\int \bar u\big(f(u_1)-f(u_2)\big) \ \d x =0.
 \ee
 By Corollary \ref{c3} we have 
 \ben
 \int \bar u\big(f(u_1)-f(u_2)\big) \d x & \geq \alpha_1 \|\bar u\|_p^p -l_2 \|\bar u\|^2 .
 \ee
 Hence, with $\mu:= \max\{2(l_2-\lambda),1\} \geq 1$,
 \be \label{20.1}
 \frac{\d }{\d t} \|\bar u\|^2 + 2\alpha_1 \|\bar u\|^p_p +2\|\nabla \bar u\|^2 
 \leq \mu \|\bar u\|^2 .
 \ee 
 By Gronwall's lemma it follows
 \be \label{12.6} 
 \|\bar u(t) \|^2 \leq e^{\mu t}\| \bar u_0\|^2,\quad \forall t\in (0,T],
 \ee
 so, integrating \eqref{20.1} over $(0,T)$ gives
 \be\label{12.7}
 2\alpha_1 \int^T_0 \|\bar u(s)\|_p^p \ \d s +2 \int^T_0 \|\nabla \bar u(s)\|^2 \ \d s 
 &\leq \int^T_0 \mu e^{\mu s}\|\bar u_0\|^2\ \d s+\|\bar u_0\|^2 \\
 &= \big(e^{\mu T} +1\big) \|\bar u_0\|^2 .
 \ee
 This implies that there exists a positive constant $ C_{T}$ exclusively depending on $T $ and parameters $\{\mu, \alpha_1,p\}$ such that 
 \begin{align} 
 \int_0^T \Big( \| s \bar u(s)\|_p^p + \| s \nabla \bar u(s)\|^2\Big) \ \d s \nonumber 
 & 
 \leq \left( T^{ p}+ T^{2 }\right) \int_0^T\! \Big( \|\bar u(s)\|_p^p + \|\nabla \bar u(s)\|^2\Big) \ \d s \\
 & 
 \leq C_{T} \|\bar u_0\|^2 .\label{21.3}
 \end{align}
 
 Multiplying \eqref{eq_diff} by $|\bar u|^{p-2}\bar u$ and integrating over $D$ we have 
 \ben
 \frac 1p \frac{\d }{\d t} \|\bar u\|_p^p+\lambda \|\bar u\|_p^p -\int \triangle \bar u \big( |\bar u|^{p-2}\bar u\big) \d x +\int\big (f(u_1)-f(u_2) \big)|\bar u|^{p-2} \bar u \ \d x=0.
 \ee
 Since 
 \ben
 -\int \triangle \bar u \big( |\bar u|^{p-2}\bar u\big)\ \d x =\int \nabla \bar u \cdot \nabla \big( |\bar u|^{p-2}\bar u\big) \ \d x \geq 0 
 \ee
 and, by Corollary \ref{c3},
 \ben
 \int\big (f(u_1)-f(u_2) \big)|\bar u|^{p-2} \bar u \ \d x 
 \geq \alpha_1 \|\bar u\|_{2p-2}^{2p-2}-l_2 \|\bar u\|_p^p,
 \ee
 it follows that 
 \be \label{21.1}
 \frac 1p \frac{\d }{\d t} \|\bar u\|_p^p + \alpha_1 \|\bar u\|_{2p-2}^{2p-2} \leq (l_2-\lambda) \|\bar u\|_p^p .
 \ee
 Note that, for all $r>0$, 
 \be \label{ab}
 \frac{\d}{\d t}\big \| t^r \bar u\big\|_p^p &= \frac{\d}{\d t}\Big( t^{rp} \| \bar u\|_p^p \Big) \\
 &= rp t^{rp-1} \| \bar u\|_p^p + t^{rp} \frac{\d}{\d t} \| \bar u\|_p^p .
 \ee
 Hence, multiplying \eqref{21.1} by $ t^{ p}$ we obtain 
 \ben &
 \frac1p \frac{\d}{\d t} \| t \bar u\|_p^p - t^{ p-1} \| \bar u\|_p^p 
 + \alpha_1 t^{ p} \|\bar u \|_{2p-2}^{2p-2} \leq (l_2-\lambda) t^{ p} \| \bar u\|_p^p,
 \ee
 that is, 
 \be \label{21.4}
 \frac{\d}{\d t} \big\| t \bar u\big\|_p^p + p\alpha_1 \left\| t^{\frac{ p}{2p-2}} \bar u \right\|_{2p-2}^{2p-2} 
 & \leq p \left((l_2-\lambda) +\frac{ 1}{t} \right)\big \| t \bar u\big\|_p^p\\
 & \leq c \left(1 +\frac{ 1}{t} \right)\big \| t \bar u\big\|_p^p,
 \ee
 and then
 \ben
 t \frac{\d}{\d t} \big\| t \bar u\big\|_p^p 
 \leq c \left( {t} +1\right)\big \| t \bar u\big\|_p^p ,
 \ee 
 where $c=c(p,l_2,\lambda)>0$ is a constant. Integrating the above inequality over $(0,t)$ for $t\in (0,T]$, we have 
 \be \label{28.7}
 c ( {T} +1 )\int_0^t \big \| s \bar u(s)\big\|_p^p\ \d s &\geq \int_0^t s \frac{\d}{\d s} \big\| s \bar u(s) \big\|_p^p \ \d s \\
 & = t \big\| t \bar u(t)\big\|_p^p -\int_0^t \big\| s \bar u(s) \big\|_p^p \ \d s,
 \ee
 where the identity is by integration by parts. Then, \eqref{28.7} and \eqref{21.3} give 
 \be \label{21.5}
 t \big\| t \bar u(t)\big\|_p^p & \leq c ( {T} +1 )\int_0^t \big \| s \bar u(s)\big\|_p^p\ \d s \\
 & \leq C_{T} \|\bar u_0\|^2,\quad t\in (0,T].
 \ee

 Multiplying \eqref{21.4} by $ t^2$, by \eqref{21.5} we have 
 \be \label{21.6}
 t^2 \frac{\d}{\d t} \big\| t \bar u\big\|_p^p + \Big\| t^{\frac{ p+2}{2p-2}} \bar u \Big\|_{2p-2}^{2p-2} 
 & \leq c t^2 \left(1 +\frac{ 1}{t} \right)\big \| t \bar u\big\|_p^p\\
 & \leq C_{T} \|\bar u_0\|^2.
 \ee
 Then integrating \eqref{21.6} over $(0,T)$ and by integration by parts we obtain
 \ben
 & T^2 \big\| T \bar u(T)\big\|_p^p -\int_0^T 2 s \big\| s \bar u(s)\big\|_p^p \ \d s +\int_0^T \Big\| s^{\frac{ p+2}{2p-2}}\bar u (s) \Big\|_{2p-2}^{2p-2} \ \d s\leq C_{T} \|\bar u_0\|^2,
 \ee
 which gives, with $b_2:={\frac{ p+2}{2p-2}}$, $a_2:=\frac{2p-2}p$ and by \eqref{21.5}, 
 \be 
 \int_0^T \big\| s^{b_2} \bar u (s) \big\|_{pa_2}^{pa_2} \ \d s&\leq \int_0^T 2 s \big\| s \bar u(s)\big\|_p^p \ \d s + C_{T} \|\bar u_0\|^2 \\
 &\leq 2 T C_{T} \|\bar u_0\|^2 + C_{T} \|\bar u_0\|^2.\label{21.7}
 \ee
 By \eqref{21.5} and \eqref{21.7} we have proved $(A_k) $ and $(B_k) $ for $k=1$.

Next, for $k\geq 1$, assuming $(A_k)$ and $(B_k)$ we prove $(A_{k+1})$ and $(B_{k+1})$.
 Multiplying \eqref{eq_diff} by $ \bar u |\bar u|^{pa_{k+1}-2 } $ and then integrating over $D$, we have 
 \ben
 & \frac{1}{pa_{k+1} }\frac{\d}{\d t} \|\bar u\|_{p a_{k+1}}^{p a_{k+1}} 
 + \lambda \|\bar u\|_{pa_{k+1}}^{pa_{k+1}} +\int \big(f(u_1)-f(u_2) \big) \bar u |\bar u|^{pa_{k+1}-2 } \ \d x \leq 0. 
 \ee
 Since, by Corollary \ref{c3} again, 
 \[
 \int \big(f(u_1)-f(u_2) \big) \bar u |\bar u|^{pa_{k+1}-2 } \ \d x \geq 
 \alpha_1 \|\bar u\|_{pa_{k+1}+p-2}^{pa_{k+1}+p-2}
 -l_2 \|\bar u\|_{pa_{k+1}}^{pa_{k+1}},
 \] 
 we have 
 \be \label{28.1}
 \frac{\d}{\d t} \|\bar u\|_{p a_{k+1}}^{p a_{k+1}} 
 + \|\bar u\|_{pa_{k+1}+p-2}^{pa_{k+1}+p-2}
 \leq c\|\bar u\|_{pa_{k+1}}^{pa_{k+1}} ,
\ee
where $c=c(pa_{k+1},\alpha_1,l_2)>0$. Similarly to \eqref{ab} we have 
 \ben
 \frac{\d}{\d t}\big \| t^{b_{k+1}} \bar u\big\|_{pa_{k+1}}^{p a_{k+1}}
 = b_{k+1} pa_{k+1} t^{ b_{k+1} pa_{k+1}-1} \| \bar u\|_{pa_{k+1}}^{pa_{k+1}} 
 + t^{ b_{k+1} pa_{k+1}} \frac{\d}{\d t} \| \bar u\|_{pa_{k+1}}^{pa_{k+1}},
 \ee
 so multiplying \eqref{28.1} by $ t^{ b_{k+1} pa_{k+1}} $ we obtain 
 \be \label{28.3}
 & \frac{\d}{\d t}\big \| t^{b_{k+1}} \bar u\big\|_{pa_{k+1}}^{p a_{k+1}} + t^{ b_{k+1} pa_{k+1}} \|\bar u\|_{pa_{k+1}+p-2}^{pa_{k+1}+p-2}\\
 &\quad \leq c \big (1+ t^{-1}\big ) \big \| t^{ b_{k+1}}\bar u\big \|_{pa_{k+1}}^{pa_{k+1}} ,
 \ee
 and then for all $t\in (0,T)$ 
 \be \label{28.2} 
 & t \frac{\d}{\d t}\big \| t^{b_{k+1}} \bar u\big\|_{pa_{k+1}}^{p a_{k+1}} \leq c ( T-1 ) \big \| t^{ b_{k+1}}\bar u\big \|_{pa_{k+1}}^{pa_{k+1}} ,
\ee
 where $c=c(pa_{k+1},b_{k+1}, \alpha_1,l_2)>0$. Integrating \eqref{28.2} over $ (0,t)$ we have
 \ben
 &
 t \big \| t^{b_{k+1}} \bar u(t)\big\|_{pa_{k+1}}^{p a_{k+1}} 
 -\int_0^t \big \| s^{b_{k+1}} \bar u(s)\big\|_{pa_{k+1}}^{p a_{k+1}} \ \d s \\
 &\quad \leq c ( T-1 ) \int_0^t \big \| s^{ b_{k+1}}\bar u(s)\big \|_{pa_{k+1}}^{pa_{k+1}} \ \d s,\quad t\in(0,T],
 \ee
 by which $(A_{k+1})$ is concluded since we have assumed $(B_k)$.
 
 Then we prove $(B_{k+1})$. With $a_{k+2}:= a_{k+1}+\frac{p-2}p$, \eqref{28.3} is reformulated as 
 \ben 
 \frac{\d}{\d t}\big \| t^{b_{k+1}} \bar u\big\|_{pa_{k+1}}^{p a_{k+1}} +\Big \| t^{\frac{ b_{k+1} a_{k+1}}{a_{k+2}}} \bar u \Big \|_{pa_{k+2}}^{pa_{k+2}} 
 \leq c \big (1+ t^{-1}\big ) \big \| t^{ b_{k+1}}\bar u\big \|_{pa_{k+1}}^{pa_{k+1}} ,
 \ee
 which multiplied by $ t^2$ gives
 \be \label{28.6}
 & t^2 \frac{\d}{\d t}\big \| t^{b_{k+1}} \bar u\big\|_{pa_{k+1}}^{p a_{k+1}} + t^2\Big \| t^{\frac{ b_{k+1} a_{k+1}}{a_{k+2}}} \bar u \Big \|_{pa_{k+2}}^{pa_{k+2}}\\
 &\quad \leq c t ( T +1 ) \big \| t^{ b_{k+1}}\bar u\big \|_{pa_{k+1}}^{pa_{k+1}}, \quad \forall t\in(0,T).
 \ee 
 With $b_{k+2}:=\frac{b_{k+1}pa_{k+1}+2}{pa_{k+2}}$, the second term of \eqref{28.6} is rewritten as 
 \ben
 t^2\Big \| t^{\frac{ b_{k+1} a_{k+1}}{a_{k+2}}} \bar u \Big \|_{pa_{k+2}}^{pa_{k+2}} = \big \| t^{b_{k+2}} \bar u \big \|_{pa_{k+2}}^{pa_{k+2}}.
 \ee
 Hence, from \eqref{28.6} and $ (A_{k+1})$ it follows 
 \be \label{28.5}
 t^2 \frac{\d}{\d t}\big \| t^{b_{k+1}} \bar u\big\|_{pa_{k+1}}^{p a_{k+1}} 
 + \big \| t^{b_{k+2}} \bar u \big \|_{pa_{k+2}}^{pa_{k+2}} \leq c t ( T+1 ) \big \| t^{ b_{k+1}}\bar u\big \|_{pa_{k+1}}^{pa_{k+1}} 
 \leq C_{T} \|\bar u_0\|^2.
 \ee 
 Integrating \eqref{28.5} over $(0,T)$ we have 
 \ben
 & T^2 \big \| T^{b_{k+1}} \bar u(T)\big\|_{pa_{k+1}}^{p a_{k+1}} -\int_0^T 2 s\big \| s^{b_{k+1}} \bar u(s)\big\|_{pa_{k+1}}^{p a_{k+1}} \ \d s +\int_0^T
 \big \| s^{b_{k+2}} \bar u(s) \big \|_{pa_{k+2}}^{pa_{k+2}}\ \d s \\
 &\quad \leq C_{T} \|\bar u_0\|^2,
 \ee 
 so, by $(B_k)$,
 \ben
 \int_0^T
 \big \| s^{b_{k+2}} \bar u(s) \big \|_{pa_{k+2}}^{pa_{k+2}}\ \d s & \leq 2 T \int_0^T \big \| s^{b_{k+1}} \bar u(s) \big\|_{pa_{k+1}}^{p a_{k+1}} \ \d s + C_{T} \|\bar u_0\|^2\\
 & \leq \Big( 2 T C_{T}^{(k)} + C_{T} \Big) \|\bar u_0\|^2 ,
 \ee 
 from which $(B_{k+1})$ follows.
 \end{proof}
 
 \subsection{\texorpdfstring{$(L^2,H^1_0)$}--continuity}
 
 Now, 
 we study the $(L^2,H_0^1)$-continuity of system \eqref{eq}. As has been noted in introduction, though the continuity in $H_0^1$ was also studied in \cite{zhu16cma,cao15jde} in a framework of non-autonomous and random dynamical systems, see also \cite{zhong06jde,sun10jde}, the analysis here is quite different. Thanks to our $(L^2,L^\gamma)$-continuity established previously we do not rely heavily on interpolation 
 inequalities and the continuity in $H_0^1$ is obtained directly for all space dimension $N\geq 1$. 
 
 As in \cite{zhu16cma,cao15jde}, we assume that for some positive constant $c$ 
 \be \label{f_add}
 |f(s_1)-f(s_2)| \leq c |s_1-s_2|(1+|s_1|^{p-2}+|s_2|^{p-2}) .
 \ee
 Since $f\in C^1$, it is equivalent to require positive constants $\kappa_0$ and $l_0$ such that 
 \ben
 | f'(s) | \leq \kappa_0 |s|^{p-2} + l_0 .
 \ee

 \begin{theorem}[$(L^2,H_0^1)$-continuity] \label{lem2}
 Let conditions \eqref{f1}-\eqref{kappa} and \eqref{f_add} hold. Then for any $t>0$ and initial data $u_{0,j} $ with $\|u_{0,j}\|\leq R$ ($j=1,2$) there exist positive constants $C_{R,t}$ and $C_t$ such that the difference $\bar u $ of the corresponding solutions of \eqref{eq} satisfies
 \ben
 \| \nabla \bar u(t) \|^2 \leq 
 C_{R,t} \|\bar u_0\| ^{\frac2{p-1}} 
 + C_{t} \| \bar u_0\|^2 ,
 \ee
 where $C_{R,t}$ and $C_t$ can be explicitly computed independently of space dimension $N\geq 1$.
 \end{theorem}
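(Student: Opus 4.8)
The plan is to multiply the difference equation \eqref{eq_diff} by $-\triangle\bar u$ and integrate over $D$, which produces
\[
\frac12\frac{\d}{\d t}\|\nabla\bar u\|^2+\lambda\|\nabla\bar u\|^2+\|\triangle\bar u\|^2
=\int\big(f(u_1)-f(u_2)\big)\triangle\bar u\ \d x.
\]
The right-hand side is controlled using \eqref{f_add} and Young's inequality: $\big|\int(f(u_1)-f(u_2))\triangle\bar u\big|\le \tfrac12\|\triangle\bar u\|^2+c\int|\bar u|^2(1+|u_1|^{p-2}+|u_2|^{p-2})^2\,\d x$, so that
\[
\frac{\d}{\d t}\|\nabla\bar u\|^2+2\lambda\|\nabla\bar u\|^2\le c\int|\bar u|^2\big(1+|u_1|^{2(p-2)}+|u_2|^{2(p-2)}\big)\,\d x =: \Phi(t).
\]
The key point — and this is why the earlier sections matter — is that $\Phi(t)$ is integrable on $(0,T)$ with a bound of the form $C_{R,t}\|\bar u_0\|^{2/(p-1)}$. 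To see this, I would apply Hölder's inequality to split $\int|\bar u|^2|u_j|^{2(p-2)}\,\d x$ into a factor $\|\bar u\|_q^2$ and a factor involving $\|u_j\|_{2(p-2)q'}^{2(p-2)}$; since Lemma \ref{lem8.1} gives $u_j\in L^\infty(\varepsilon,\infty;L^m)$ for \emph{every} $m\ge 2$ (with a bound depending only on $R=\|u_{0,j}\|$ through $e^{-\lambda t}\|u_{0,j}\|^2$ and $g$), the $u_j$-factor is bounded on $[\varepsilon,T]$ by a constant $C_{R,T}$, and $\|\bar u\|_q$ is controlled by Theorem \ref{lem} (more precisely its smoothing form, Remark \ref{rem}) — this is exactly where the arbitrary-order $(L^2,L^\gamma)$-smoothing is needed, and it is what lets us avoid interpolation and any restriction on $N$.

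Concretely I would fix $t>0$, work on $(t/2,t)$, and estimate $\int_{t/2}^t\Phi(s)\,\d s$. The decomposition $f=f_1+f_2$ from Lemma \ref{c2} can also be used to organize the estimate: the $f_2$-part contributes a term controlled by $\int|\bar u|^2$, i.e. by $\|\bar u_0\|^2$ via \eqref{12.6}, giving the $C_t\|\bar u_0\|^2$ summand; the $f_1$-part, which is essentially $|s|^{p-2}s$, contributes the genuinely nonlinear term, and here Hölder plus Theorem \ref{lem} plus Lemma \ref{lem8.1} yield a bound of the shape $C_{R,t}\|\bar u_0\|^{2/(p-1)}$ after balancing exponents — the fractional power $2/(p-1)$ arising precisely because $p$ appears in the exponent of $\bar u$ in the $L^{pa_k}$ estimates of Theorem \ref{lem}, so that extracting an $L^2$-power of $\bar u_0$ costs a root. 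Then I uniformize in time: multiplying the differential inequality by a suitable power of $s$ (or integrating an absorbed form), using $\int_{t/2}^t\|\nabla\bar u(s)\|^2\,\d s\le C_t\|\bar u_0\|^2$ from \eqref{12.7}, and applying the mean-value / uniform Gronwall argument on $(t/2,t)$ to pass from the integral bound on $\|\nabla\bar u\|^2$ to the pointwise bound at time $t$, which produces the stated estimate with $C_{R,t}$ and $C_t$ explicitly computable and independent of $N$.

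The main obstacle is the bookkeeping in the Hölder split of $\int|\bar u|^2(|u_1|^{2(p-2)}+|u_2|^{2(p-2)})\,\d x$: one must choose the conjugate exponents so that the $\bar u$-factor lands in an $L^\gamma$ for which Remark \ref{rem} applies (any $\gamma\ge2$ works, so this is not restrictive) while simultaneously the $u_j$-factor lands in an $L^m$ covered by Lemma \ref{lem8.1} (again any $m\ge2$ works) — so in fact there is no genuine obstruction, only a choice to be made, and the absence of any dimensional constraint on $N$ is exactly the payoff of having the full scale of $L^\gamma$-estimates available. A secondary technical point is tracking the time-singularities: the bounds from Lemma \ref{lem8.1} and Theorem \ref{lem} blow up as $t\downarrow0$, so all estimates must be carried out on an interval bounded away from $0$, which is harmless for the pointwise-in-$t>0$ conclusion and is handled by the shift to $(t/2,t)$.
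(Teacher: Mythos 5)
Your overall architecture matches the paper's proof: multiply \eqref{eq_diff} by $-\triangle\bar u$, absorb $\|\triangle\bar u\|^2$ via \eqref{f_add} and Young, time-weight and average over $(t/2,t)$, draw the factor $\|\bar u_0\|^{2/(p-1)}$ from the $L^{2p-2}$-estimate $(A_2)$ of Theorem \ref{lem}, and close with \eqref{12.6}--\eqref{12.7}. However, there is one genuine gap in how you propose to control the nonlinear term. You assert that Lemma \ref{lem8.1} gives $u_j\in L^\infty(\varepsilon,\infty;L^m)$ for \emph{every} $m\ge 2$, and you use this to bound the $u_j$-factor of the H\"older split uniformly in time. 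This is false under the standing assumption $g\in L^2$: the $k$-th step of Lemma \ref{lem8.1} requires $g\in L^{pa_{k+1}/(p-1)}$, and with $g$ only in $L^2$ one obtains $u_j\in L^\infty(\varepsilon,\infty;L^p)$ and nothing higher (this is exactly the content of the remark following that lemma). Consequently your split $\int|\bar u|^2|u_j|^{2(p-2)}\le\|\bar u\|_q^2\,\|u_j\|_{2(p-2)q'}^{2(p-2)}$ with the second factor bounded in $L^\infty_t$ would need $2(p-2)q'\le p$, i.e. $q'\le p/(2p-4)$, which is impossible for any $q'\ge1$ once $p\ge4$. So ``there is no genuine obstruction, only a choice to be made'' is too optimistic: the full scale of $L^\gamma$-smoothing is available for the \emph{difference} $\bar u$, but not for the solutions $u_j$ themselves.

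The repair is to reverse the roles of the two factors, which is what the paper does. Take both factors in $L^{2p-2}$, i.e.
\[
\int|\bar u|^2|u_j|^{2p-4}\,\d x\le\|u_j\|_{2p-2}^{2p-4}\,\|\bar u\|_{2p-2}^{2},
\]
put the $\bar u$-factor in $L^\infty_t$ via $(A_2)$ (after the weight $s^{2r}$ with $r=\tfrac{p+3}{2p-2}$, giving $\sup_s s^{2r}\|\bar u(s)\|_{2p-2}^2\le(C^{(2)}_t\|\bar u_0\|^2)^{1/(p-1)}$), and keep the $u_j$-factor \emph{inside the time integral}, where $\int_{t/2}^t\|u_j(s)\|_{2p-2}^{2p-2}\,\d s\le C_R$ follows from integrating \eqref{15.1} and controlling $\|u_j(t/2)\|_p^p$ by Lemma \ref{lem8.1} at $k=1$ (which only needs $g\in L^2$). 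With this single correction your argument goes through and coincides with the paper's proof.
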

 \begin{remark}\rm \label{rem2}
 Since $p>2$, Theorem \ref{lem2} gives the $(L^2,H_0^1)$-smoothing property 
 \[
 \|\nabla \bar u(1)\| \leq c \|\bar u_0\|^{\frac 1 {p-1}} ,\quad \forall \|\bar u_0\|\leq 1.
 \]
 \end{remark}
 
 \begin{proof}[Proof of Theorem \ref{lem2}]
 Multiplying \eqref{eq_diff} by $-\triangle \bar u$ and integrating over $D$ we have 
 \ben
 \frac12 \frac{\d}{\d t}\|\nabla \bar u\|^2+\lambda \|\nabla \bar u\|^2 +\|\triangle \bar u\|^2 =\int \triangle\bar u\big(f(u_1)-f(u_2)\big) \ \d x .
 \ee
 Since by \eqref{f_add} and Young's inequality we have
 \ben
 \int \triangle\bar u\big(f(u_1)-f(u_2)\big) \d x 
 & 
 \leq c \int |\Delta u| |\bar u| \big(1+|u_1|^{p-2} +|u_2|^{p-2}\big) \d x \\ & 
 \leq \|\triangle \bar u\|^2 +c \int \big(|u_1|^{2p-4} +|u_2|^{2p-4}\big) |\bar u|^2 \ \d x + c\|\bar u \|^2\\ 
 & \leq \|\triangle \bar u\|^2 +c \big(\|u_1\|^{2p-4}_{2p-2} +\|u_2\|^{2p-4}_{2p-2} \big) \|\bar u\|_{2p-2}^2 + c\|\bar u \|^2 ,
 \ee
 it follows 
 \be \label{12.1}
 \frac{\d}{\d t}\|\nabla \bar u\|^2 \leq 
 c \big(\|u_1\|^{2p-4}_{2p-2} +\|u_2\|^{2p-4}_{2p-2} \big) \|\bar u\|_{2p-2}^2 + c\|\bar u \|^2 .
 \ee
 Take 
 \[
 r :=\frac{p+3}{2p-2} .
 \]
 Then multiplying \eqref{12.1} by $ t^{2r}$, by formula \eqref{ab} we have 
 \ben
 \frac{\d}{\d t} \| t^r \nabla \bar u\|^2 -2r t^{2r-1} \|\nabla \bar u\|^2 \leq 
 c t^{2r} \big(\|u_1\|^{2p-4}_{2p-2} +\|u_2\|^{2p-4}_{2p-2} \big) \|\bar u\|_{2p-2}^2 + c t^{2r}\|\bar u \|^2.
 \ee 
 For $s\in (\frac{t}2,t)$, integrating the above inequality over $(s,t)$ we obtain 
 \ben
 & \| t^r \nabla \bar u(t) \|^2 - \| s^r\nabla \bar u(s)\|^2 - \int_0^t 2r s^{2r-1} \|\nabla \bar u(s) \|^2 \ \d s \\
 &\quad \leq
 c \int^t_{\frac{t}2} s^{2r} \big(\|u_1(s) \|^{2p-4}_{2p-2} +\|u_2(s)\|^{2p-4}_{2p-2} \big) \|\bar u (s) \|_{2p-2}^2 \ \d s 
 + c\int_0^t s^{2r} \|\bar u(s) \|^2\ \d s,
 \ee
 and then integrating with respect to $s$ over $(\frac{t}2,t)$ yields 
 \be \label{12.3}
 & t\| t^{r} \nabla \bar u(t) \|^2 - \int_0^t\| s^r\nabla \bar u(s)\|^2\ \d s - t \int_0^t 2r s^{2r-1} \|\nabla \bar u(s) \|^2 \ \d s \\
 &\quad \leq
 c t \int^t_{\frac{t}2} s^{2r} \big(\|u_1(s) \|^{2p-4}_{2p-2} +\|u_2(s)\|^{2p-4}_{2p-2} \big) \|\bar u (s) \|_{2p-2}^2 \, \d s \\
 &\qquad\quad \
 + c t^{2r+1} \int_0^t \|\bar u(s) \|^2\ \d s .
 \ee
 
 Note that, by $(A_2)$ in Theorem \ref{lem} with $pa_2=2p-2$ and $pa_2b_2= p+2$, 
 \be \label{12.4}
 \sup_{s\in(0,t]}\Big( s^{2r } \|\bar u(s)\|^2_{2p-2}\Big) & = \sup_{s\in(0,t]} \Big( s\| s^{b_2}\bar u(s)\|^{pa_2}_{pa_2} \Big)^{\frac 2{pa_2}} \\
 &\leq \Big( C_{t}^{(2)} \|\bar u_0\|^2 \Big)^{\frac 1{p-1}} .
 \ee
 Hence, from \eqref{12.3} and \eqref{12.4} it follows
 \ben
 t ^{2r+1}\| \nabla \bar u(t) \|^2& \leq 
 c t \int^t_{\frac{t}2} \big(\|u_1(s) \|^{2p-4}_{2p-2} +\|u_2(s)\|^{2p-4}_{2p-2} \big) \d s\Big( C^{(2)}_{t} \|\bar u_0\|^2\Big)^{\frac 1{p-1}} \\
 &\ \quad 
 + c t^{2r+1} \! \int_0^t \|\bar u(s) \|^2\ \d s+ (2r+1) t^{2r} \! \int_0^t \|\nabla \bar u(s) \|^2 \ \d s ,
 \ee
 which along with \eqref{12.6} and \eqref{12.7} gives 
 \be \label{14.2}
 \| \nabla \bar u(t) \|^2 \leq 
 c_{t} \int^t_{\frac{t}2} \left (\|u_1(s) \|^{2p-4}_{2p-2} +\|u_2(s)\|^{2p-4}_{2p-2} \right) \d s \|\bar u_0\| ^{\frac2{p-1}} 
 + c_{t} \| \bar u_0\|^2,
 \ee
 where $c_{t}>0 $ is a constant depending on $C_{t}^{(2)}$ and $t$. 
 
 Recall that any solution $u_j$ ($j=1,2$) satisfies \eqref{15.1}, from which and analogously to \eqref{15.11} we have
 \ben 
 \int^t_{\frac t2} e^{\lambda (s-t)} \|u_j(s) \|^{2p-2}_{2p-2}\ \d s 
 & \leq e^{-\frac{\lambda t} 2} \Big \|u_j \Big(\frac t2 \Big) \Big\|^{p}_{p}+c\|g \|^2 
 +c \\
 &
 \leq c \|u_{0,j}\|^2+c\|g \|^2 +c =:C_R,
 \ee 
 where the second inequality is due to the uniform boundedness of $\|u_j(t)\|^p_p$ given in Lemma \ref{lem8.1} (taking $k=1$).
 Hence, 
 \ben
 \int^t_{\frac{t}2} \|u_j(s) \|^{2p-4}_{2p-2} \ \d s & \leq 
 \int^t_{\frac t2} \|u_j(s) \|^{2p-2}_{2p-2}\ \d s +ct 
 \leq C_Re^{\frac {\lambda t}2} 
 +c t 
 \ee 
 which along with \eqref{14.2} completes the proof. 
 \end{proof}

 \section{Applications to the global attractor}

 Recall that a global attractor $\A$ for a semigroup $S$ in a Banach space $X$ is a compact set in $X $ which is invariant under $S$, namely, $S(t,\A )=\A$ for all $t\geq 0$, and attracts all bounded subsets $B$ of $X$, namely, $\lim_{t\to \infty} \dist_X(S (t,B), \A)=0$ where $\dist_X$ denotes the Hausdorff semi-metric, see, e.g., \cite{babin92}.
 Under a standard argument as in \cite{robinson01,zelik00} it is well-known that the reaction-diffusion system \eqref{eq} with conditions \eqref{f1}-\eqref{f3} and $g\in L^2$ has a finite fractal dimensional global attractor in $L^2$. More precisely, we have
 \begin{lemma}\cite{robinson01,zelik00} \label{lem5}
 Let conditions \eqref{f1}-\eqref{f3} hold and $g\in L^2(D)$. Then the semigroup $S$ generated by the reaction-diffusion system \eqref{eq} has an absorbing set bounded in $H_0^1(D)$ and a global attractor $\A$ in $L^2(D)$ which has a finite fractal dimension $dim_F(\A;L^2(D)) <\infty$.
 \end{lemma}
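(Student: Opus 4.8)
This is a classical result, so I only sketch the standard four-step scheme; the quantitative estimates involved are either routine or have already been established above, and I suppress the computations. \emph{First}, under \eqref{f1}--\eqref{f3} with $g\in L^2$ the problem \eqref{eq} is globally well posed in $L^2$ and $u_0\mapsto u(t)$ is continuous on $L^2$, so $\{S(t)\}_{t\ge 0}$ is a semigroup on $L^2$; multiplying \eqref{eq} by $u$, using \eqref{f2} and the Poincar\'e inequality exactly as in \eqref{16.3}, and applying Gronwall's inequality yields a bounded absorbing set in $L^2$, while integrating \eqref{16.3} over $[t,t+1]$ also gives the uniform tail bound $\int_t^{t+1}\big(\|\nabla u(s)\|^2+\|u(s)\|_p^p\big)\,\d s\le c$ for all large $t$, uniformly on bounded sets of initial data. \emph{Second}, I would multiply \eqref{eq} by $-\triangle u$ and integrate by parts, bounding the nonlinear contribution $\int f'(u)|\nabla u|^2\,\d x$ from below by $-l\|\nabla u\|^2$ via \eqref{f1}, which produces
\[
\frac{\d}{\d t}\|\nabla u\|^2 + 2\|\triangle u\|^2 \le 2l\|\nabla u\|^2 + c\|g\|^2 ;
\]
feeding the time-integral bound from the first step into the uniform Gronwall lemma then gives a bounded absorbing set $\mathcal B_1$ in $H_0^1$, which is the first assertion.

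\emph{Third}, since $D$ is bounded the embedding $H_0^1(D)\hookrightarrow L^2(D)$ is compact, so $\mathcal B_1$ is precompact in $L^2$ and $S$ is asymptotically compact in $L^2$; combined with the $L^2$-absorbing set and the continuity of each $S(t)$ on $L^2$, the standard existence theorem (see \cite{babin92,robinson01}) yields a global attractor $\A\subset L^2$. By invariance $\A\subset\mathcal B_1$, so $\A$ is bounded in $H_0^1$, and hence (via Lemma~\ref{lem8.1}) also in $L^p$.

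\emph{Fourth}, to bound $\dim_F(\A;L^2)$ I would establish a smoothing property on $\A$: there exist $t_*>0$ and $L>0$ with $\|S(t_*)u_1-S(t_*)u_2\|_{H_0^1}\le L\|u_1-u_2\|$ for all $u_1,u_2\in\A$. This follows from the energy estimate for $\bar u=u_1-u_2$ carried out just as in the proof of Theorem~\ref{lem2}, the point being that the coefficients $\|u_j(s)\|_{2p-2}^{2p-4}$ appearing there are uniformly bounded along trajectories lying in $\A$ because $\A$ is bounded in $H_0^1\cap L^p$ by the previous step. Since $S(t_*):\A\to\A$ is then Lipschitz from $L^2$ into $H_0^1$ and $H_0^1$ embeds compactly in $L^2$, the abstract criterion that a compact, $S(t_*)$-invariant set enjoying such an $H_0^1$-smoothing estimate has finite fractal dimension (see \cite{miranville08chapter}, or equivalently the classical volume-contraction computation on the variational equation) gives $\dim_F(\A;L^2)<\infty$.

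The only genuinely delicate point is the fourth step: the $L^2$-to-$H_0^1$ Lipschitz constant must be \emph{uniform over $\A$}, which forces one to know in advance that $\A$ is bounded in a space ($H_0^1\cap L^p$) in which the $p$-growth of $f$ can be absorbed — exactly what the second and third steps, together with Lemma~\ref{lem8.1}, provide. All the remaining estimates are routine.
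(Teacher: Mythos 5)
The paper itself does not prove this lemma; it is quoted from \cite{robinson01,zelik00} as classical. Your first three steps (the $L^2$ energy estimate and absorbing set, the $-\triangle u$ multiplier with $f'\ge -l$ and the uniform Gronwall lemma for the $H_0^1$ absorbing set, and asymptotic compactness via the compact embedding $H_0^1(D)\hookrightarrow L^2(D)$) are exactly the standard route and are fine, modulo the usual Galerkin justification of the $-\triangle u$ multiplier for weak solutions.

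The fourth step, the bound $\dim_F(\A;L^2)<\infty$, has a genuine gap. First, it relies on the difference estimate of Theorem~\ref{lem2}, which needs the extra hypothesis \eqref{f_add}; Lemma~\ref{lem5} assumes only \eqref{f1}--\eqref{f3}, under which $|f'|$ has no upper bound and the term $\int\triangle\bar u\,(f(u_1)-f(u_2))\,\d x$ cannot be estimated. Second, even granting \eqref{f_add}, what that argument yields is H\"older, not Lipschitz, smoothing: the exponent $\tfrac1{p-1}$ in Theorem~\ref{lem2} does not come from a lack of uniform bounds on the trajectories but from the factor $\|\bar u\|_{2p-2}^2$, which is controlled through $(A_2)$ only as $(C\|\bar u_0\|^2)^{1/(p-1)}$; moreover your claim that boundedness of $\A$ in $H_0^1\cap L^p$ makes $\|u_j(s)\|_{2p-2}$ uniformly bounded is false for general $N$ — this is precisely the Sobolev-embedding obstruction the paper is built to avoid, and the proof of Theorem~\ref{lem2} only controls the time integral $\int_{t/2}^t\|u_j\|_{2p-2}^{2p-2}\,\d s$. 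Third, and most importantly, a H\"older smoothing $\|Sx-Sy\|_{H_0^1}\le L\|x-y\|^{\delta}$ with $\delta<1$ into a compactly embedded space does \emph{not} imply finite fractal dimension: iterating the covering estimate $N_{\theta L\varepsilon^{\delta}}(\A)\le M_\theta\,N_{\varepsilon}(\A)$ generates radii $\varepsilon_{n+1}=\theta L\varepsilon_n^{\delta}$ converging to the positive fixed point $(\theta L)^{1/(1-\delta)}$ rather than to $0$. This is exactly why the paper uses its H\"older smoothing only to \emph{transfer} an already known finite dimension from $L^2$ to stronger norms at the cost of the factor $1/\delta$, and why Lemma~\ref{lem5} must be established independently. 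The argument of \cite{robinson01,zelik00} is the Constantin--Foias--Temam trace formula: for the linearization $U\mapsto\triangle U-\lambda U-f'(u)U$ one bounds the $m$-dimensional trace by $-\sum_{j=1}^m\|\nabla\phi_j\|^2+lm\le -c\,m^{1+2/N}+lm$ using only $f'\ge -l$ from \eqref{f1} and the min--max characterization of the Dirichlet eigenvalues; this is negative for large $m$ and gives $\dim_F(\A;L^2)\le Cm$. You should replace your fourth step by this, or by a genuine Lipschitz $(L^2,H_0^1)$-squeezing on $\A$, which would require an $L^\infty$ (or at least $L^{2p-2}$) bound on $\A$ that you have not established.
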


 \subsection{Topological properties}
 In fact, according to the bi-spatial attractor theory one can show that the global attractor $\A$ is in fact compact in $L^p$ and in $ H_0^1$, and is attracting in the corresponding topology, see, e.g., \cite{zhao12na,cui18jdde,li15jde}. The key point is to prove the system to be asymptotically compact w.r.t$.$ the topology of $L^p$ and $H_0^1$, respectively, see, e.g., \cite[Theorem 3.9]{cui15na}. Since bi-spatial theory generally requires an absorbing ball that belongs to $L^p$ and $H_0^1$, at the light of Lemma \ref{lem8.1} one would not expect the attractor to be $(L^2, L^\gamma)$ for $\gamma>p$ and $g\in L^2$. Nevertheless, Sun \cite{sun10jde}, and then latter \cite{cao15jde,zhu16cma} in random and non-autonomous cases, showed that the attraction of the attractor can happen in $L^\gamma$ for any $\gamma\geq 2$, but only for $N\geq 3$ due to the restrictions of interpolation inequalities involved.

 In the following, making use of our $(L^2,L^\gamma)$-continuity we study the topological properties of the global attractor for all $N\geq 1$ in a different way from the bi-spatial attractor theory. We begin with some abstract analysis.
 
 Let $X,Y$ be two Banach spaces, and $S$ a semigroup on $X$ which need not take values in $Y$. The following result indicates that the $(X,Y)$-continuity ensures automatically more regular topological properties of an attractor, i.e., the attracting property and the compactness property. 
 
 \begin{proposition} \label{th-bi}
 Suppose that $S$ is a semigroup with global attractor $\A$ in $X$. If $S$ is moreover $(X,Y)$-continuous, that is, for any $t>0$ the mapping $S(t,\cdot)$ is $(X,Y)$-continuous satisfying Definition \ref{def_bi}, then 
 \begin{itemize}
 \item[(i)] the attractor $\A$ attracts bounded subsets of $X$ in the topology of $Y$;
 \item[(ii)] $\A$ is quasi compact in the topology of $Y$ in the sense that for any sequence $\{x_n\}_{n\in\N}\subset \A$, there exists a $b\in \A$ such that, up to a subsequence, 
 \[
 \|x_n-b\|_Y\to 0;
 \]
 if, moreover, $\A\subset Y$, then $\A$ is a compact subset of $Y$;
 \item[(iii)] if $ \A\cap Y$ is dense in $\A$, i.e., $\A = \overline{\A\cap Y}^X$, then $\A\subset Y$, and so
 $\A$ is a compact subset of $Y$; 
 \item[(iv)] for any $z_0\in \A$, the translation set $ \A-z_0=\{x-z_0:x\in \A\}$ of the attractor is a compact subset of $Y$. Consequently, if $0\in \A$ then $\A$ is a compact subset of $Y$.
 \end{itemize}
 \end{proposition}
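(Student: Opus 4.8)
The plan is to derive all four assertions from just two ingredients: the invariance $S(t,\A)=\A$, which is what makes every $Y$-difference in the statement legitimate, and a single application of $(X,Y)$-continuity combined with the $X$-compactness of $\A$. First I would record the elementary fact that, by invariance together with Definition~\ref{def_bi}, for every $t>0$, every $x\in X$ and every $a\in\A$ one has $S(t,x)-a\in Y$: writing $a=S(t,a')$ with $a'\in\A$, we get $S(t,x)-a=S(t,x)-S(t,a')\in Y$. In particular $a_1-a_2\in Y$ for all $a_1,a_2\in\A$, hence $\A-z_0\subset Y$ for every $z_0\in\A$, and the $Y$-distances $\dist_Y(S(t,x),\A):=\inf_{a\in\A}\|S(t,x)-a\|_Y$ occurring in (i) make sense.

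The core is the quasi-compactness of (ii). Given $\{x_n\}_{n\in\N}\subset\A$, invariance lets me write $x_n=S(1,y_n)$ with $y_n\in\A$; by $X$-compactness of $\A$ a subsequence satisfies $y_{n_k}\to y\in\A$ in $X$, and then $(X,Y)$-continuity of $S(1,\cdot)$ gives $\|x_{n_k}-b\|_Y=\|S(1,y_{n_k})-S(1,y)\|_Y\to0$ with $b:=S(1,y)\in\A$. This proves (ii); if moreover $\A\subset Y$, the same statement says $\A$ is sequentially compact in the metric space $Y$, hence compact there. Assertion (iv) follows at once: $\A-z_0\subset Y$ by the opening remark, and subtracting the fixed $z_0$ from the subsequence above shows every sequence in $\A-z_0$ has a $Y$-convergent subsequence with limit in $\A-z_0$, so $\A-z_0$ is compact in $Y$; taking $z_0=0$ gives its last sentence. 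For (iii), given $a\in\A$ I would pick $a_n\in\A\cap Y$ with $a_n\to a$ in $X$ (density), apply (ii) to get a subsequence with $a_{n_k}\to b$ in $Y$, $b\in\A$; since $a_{n_k}\in Y$ and $a_{n_k}-b\in Y$ we get $b\in Y$, and as $Y$-convergence entails $X$-convergence (the two topologies being compatible, e.g. $L^\gamma\cap H_0^1\hookrightarrow L^2$) we conclude $b=a$, so $a\in Y$; thus $\A\subset Y$ and (ii) yields compactness of $\A$ in $Y$.

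For the attraction statement (i) I would argue by contradiction. If $\A$ did not attract some bounded $B\subset X$ in the $Y$-topology, there would be $\varepsilon_0>0$, $t_n\to\infty$ and $x_n\in B$ with $\dist_Y(S(t_n,x_n),\A)\geq\varepsilon_0$. Splitting $S(t_n,x_n)=S(1,S(t_n-1,x_n))$ (for $n$ large) and using the $X$-attraction of $\A$, $\dist_X(S(t_n-1,x_n),\A)\to0$, I choose $b_n\in\A$ with $\|S(t_n-1,x_n)-b_n\|_X\to0$ and, by $X$-compactness of $\A$, a subsequence with $b_{n_k}\to b\in\A$; hence $S(t_{n_k}-1,x_{n_k})\to b$ in $X$, and $(X,Y)$-continuity of $S(1,\cdot)$ gives $\|S(t_{n_k},x_{n_k})-S(1,b)\|_Y\to0$ with $S(1,b)\in S(1,\A)=\A$, contradicting $\dist_Y(S(t_{n_k},x_{n_k}),\A)\geq\varepsilon_0$.

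The only delicate point — and the place I expect to have to be careful — is that $(X,Y)$-continuity as defined controls $S(t,\cdot)$ only along sequences with a \emph{fixed} $X$-limit, while in (i)--(iv) one is always comparing a moving sequence to moving reference points in $\A$; the whole argument turns on exploiting the $X$-compactness of $\A$ to replace those reference points by a genuinely $X$-convergent sequence before invoking Definition~\ref{def_bi}. The remaining steps — invariance to legitimise the $Y$-differences, and the equivalence of sequential compactness and compactness in the metric space $Y$ — are routine.
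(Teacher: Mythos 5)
Your proposal is correct and follows essentially the same route as the paper: invariance plus $X$-compactness of $\A$ to reduce every comparison to a genuinely $X$-convergent sequence before invoking Definition~\ref{def_bi}, the contradiction argument via $S(t_n,x_n)=S(1,S(t_n-1,x_n))$ for (i), and the writing $x_n=S(1,y_n)$ for (ii)--(iv). The only difference is that in (iii) you make explicit the compatibility assumption (that $Y$-convergence forces the same limit as $X$-convergence) which the paper's proof invokes tacitly as ``uniqueness of the limit''; this is a point in your favour rather than a gap.
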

 \begin{proof}
 (i) Given a bounded set $B\subset X$, we prove by contradiction that 
 \ben
 \dist_Y(S(t,B), \A)\to 0,\quad t\to \infty.
 \ee
 If it were not the case, then there exist a $\delta>0$ and a sequences $x_n\in B$ and $t_n\to \infty$ such that 
 \be \label{8.2}
 \dist_Y(S(t_n ,x_n), \A) \geq \delta, \quad n\in \N. 
 \ee
 
 Since $\A$ attracts $B$ in the topology of $X$ and is compact in $X$, there exists an $a\in \A$ such that, up to a subsequence,
 \ben
 \|S(t_n-1,x_n) -a\|_X\to 0.
 \ee
 Since $S$ is $(X,Y)$-continuous, this makes 
 \be \label{8.1}
 \|S(t_n,x_n)- S(1,a)\|_Y =\|S(1,S(t_n-1,x_n))- S(1,a)\|_Y \to 0.
 \ee
 Since $S (1,a)\in \A$ by the invariance of $\A$, \eqref{8.1} contradicts \eqref{8.2}.

 (ii) We prove that for any sequence $\{x_n\}_{n\in\N}\subset \A$, there exists a $b\in \A$ such that, up to a subsequence, 
 \[
 \|x_n-b\|_Y\to 0.
 \]
 By the invariance of $\A$ there exists a sequence $\{y_n\}_{n\in\N}\subset \A$ such that $x_n=S(1, y_n)$. 
 Since $\A$ is compact in $X$, there exists a $y\in \A$ such that, up to a subsequence, 
 \[
 \|y_n-y\|_X\to 0 ,
 \]
 which along with the $(X,Y)$-continuity of $S$ gives 
 \[
 \|x_n-S(1,y)\|_Y = \|S(1,y_n)-S(1,y)\|_Y\to 0.
 \]
 Noticing that $b:=S(1,y)\in \A$ by the invariance of $\A$, we have the result.
 
 (iii) To show that $\A$ is a compact subset of $Y$, by (ii) it suffices to prove that $\A\subset Y$. Let $\A|_Y:= \A\cap Y$. Then since $\A=\overline{ \A|_Y} ^X$, the proof will be concluded if we have $ \overline{ \A|_Y}^X=\overline{\A|_Y}^Y (\subset Y)$.
 Clearly, $ \overline{ \A|_Y}^X\supset \overline{ \A|_Y}^Y$. To prove $ \overline{ \A|_Y}^X\subset \overline{ \A|_Y}^Y$, take arbitrarily $a \in \overline{ \A|_Y}^X $. If $a\in \A|_Y$, then $a\in \overline{\A|_Y}^Y$ as desired. If $a\notin \A|_Y $, then there exists a sequence $a_n \in \A|_Y$ such that $a_n\xto Xa$. In addition, we have proved that, up to a subsequence, $ \|a_n-b\|_Y\to 0$ for some $b\in \A$, which means that $ a_n\xto Yb\in \overline{\A|_Y}^Y$. Therefore, by the uniqueness of a limit we have $a=b \in \overline{\A|_Y}^Y$.
 
 (iv) It is clear that $\A-z_0$ is quasi-compact in $Y$, so it suffices to prove $\A-z_0\subset Y$. Take arbitrarily a $y\in \A-z_0$, then we have $y=x-z_0$ for some $x\in \A$. By the invariance of $\A$, there exist $x_1, x_2\in \A$ such that $y=S(1,x_1)-S(1,x_2) \in Y$ by the very Definition \ref{def_bi} of $(X,Y)$-continuity.
 \end{proof}

 Applying Proposition \ref{th-bi} to the reaction-diffusion system \eqref{eq} we obtain 
\begin{theorem}
 Let conditions \eqref{f1}-\eqref{kappa} hold and $g\in L^2(D)$. Then the reaction-diffusion system \eqref{eq} in any space dimension $N\geq 1$ has a global attractor $\A$ in $L^2(D)$, and 
 \begin{itemize}
 \item[(i)] 
 the attractor $\A$ is a compact subset of $L^p(D) $ but attracts bounded subsets of $L^2(D)$ in the topology of any $L^\gamma(D) $ for $\gamma\geq 2$;
 \item[(ii)] for any $z_0\in \A$ the translation $\A-z_0$ of $\A$ is a compact subset of any $L^\gamma(D) $, $\gamma\geq 2$;
 \item[(iii)] if $g=0$, then the global attractor $\A$ is a compact subset of any $L^\gamma(D) $, $\gamma\geq 2$;
 \item[(iv)] if, moreover, condition \eqref{f_add} holds, then the attractor $\A$ as well as its translation $\A-z_0$ is a compact set in $H_0^1(D)$. 
 \end{itemize}
\end{theorem}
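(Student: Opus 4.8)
The plan is to obtain this theorem as a corollary of Proposition~\ref{th-bi}, applied with $X=L^2(D)$ and with $Y$ taken in turn to be $L^p(D)$, a general $L^\gamma(D)$ with $\gamma\geq 2$, and $H_0^1(D)$. The existence of the global attractor $\A$ in $L^2(D)$, and of an $H_0^1(D)$-bounded absorbing set, is quoted from Lemma~\ref{lem5}, so the real content is twofold: first, checking that the solution semigroup $S(t,\cdot)$ satisfies the $(X,Y)$-continuity of Definition~\ref{def_bi} for every $t>0$; and second, for the compactness assertions only, checking the inclusion $\A\subset Y$.

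For the continuity hypotheses I would argue as follows. Fix $t>0$ and $\gamma\geq 2$. Given two initial data, apply Theorem~\ref{lem} with $T=t$ and pick $k\in\N$ with $pa_k\geq\gamma$; bound $(A_k)$ then reads $\|\bar u(t)\|_{pa_k}^{pa_k}\leq c(t,k)\|\bar u_0\|^2$, and since $D$ is bounded, H\"older's inequality converts this into $\bar u(t)\in L^\gamma$ together with $\|\bar u(t)\|_\gamma\leq c(t,\gamma,|D|)\|\bar u_0\|^{2/(pa_k)}$, which tends to $0$ as $\bar u_0\to 0$ in $L^2$; taking the two solutions to be those emanating from a convergent sequence and its limit, this is exactly Definition~\ref{def_bi}, so $S(t,\cdot)$ is $(L^2,L^\gamma)$-continuous. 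Under the additional hypothesis \eqref{f_add}, $(L^2,H_0^1)$-continuity of $S(t,\cdot)$ follows the same way from Theorem~\ref{lem2} (a convergent sequence in $L^2$ is bounded, and its bound $R$ feeds into the constant $C_{R,t}$), noting that $\bar u(t)\in L^2$ with $L^2$-gradient and zero trace puts $\bar u(t)\in H_0^1$.

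Next I would locate $\A$. Since $\A$ is invariant and $L^2$-bounded, $\A=S(1,\A)$, and Lemma~\ref{lem8.1} with $k=1$ — for which the exponent $pa_2/(p-1)$ equals $2$, so the $\|g\|$-term is merely $\|g\|^2<\infty$ — bounds $\|u(1)\|_p$ uniformly over $u_0\in\A$, giving $\A\subset L^p(D)$. If $g=0$, the $\|g\|$-term disappears, so the same argument with arbitrary $k$ bounds $\A=S(1,\A)$ in $L^{pa_k}(D)$ for every $k$, and as $pa_k\uparrow\infty$ this yields $\A\subset L^\gamma(D)$ for all $\gamma\geq 2$. Moreover, by invariance $\A=S(t,\A)$ eventually lies in the $H_0^1$-bounded absorbing set of Lemma~\ref{lem5}, so $\A\subset H_0^1(D)$. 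With these inputs the four parts follow by invoking Proposition~\ref{th-bi}: part~(i) from its part~(ii) with $Y=L^p$ for compactness of $\A$ in $L^p$, and from its part~(i) with $Y=L^\gamma$ for the attraction in $L^\gamma$; part~(ii) from its part~(iv) with $Y=L^\gamma$; part~(iii) from its part~(ii) with $Y=L^\gamma$, now legitimate because $g=0$ forces $\A\subset L^\gamma$; and part~(iv) from its parts~(ii) and~(iv) with $Y=H_0^1$.

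I do not expect a genuine obstacle; the theorem is engineered to be harvested from Proposition~\ref{th-bi} and the a priori estimates already proved. The one point demanding care is the mismatch between the integrability of $\A$ that is actually available and the space in which compactness is claimed: with $g$ only in $L^2$, Lemma~\ref{lem8.1} places $\A$ in $L^p$ but no higher, so compactness of $\A$ in $L^\gamma$ for $\gamma>p$ fails in general and is correctly asserted only when $g=0$, whereas the \emph{attraction} statement survives for all $\gamma\geq 2$ because Proposition~\ref{th-bi}(i) does not require $\A\subset Y$. A secondary detail is to ensure the continuity estimates are applied to $S(t,\cdot)$ for every $t>0$, not just $t=1$; this is harmless since Theorems~\ref{lem} and~\ref{lem2} hold on every interval $(0,T]$ with constants depending on $T$.
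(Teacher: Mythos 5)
Your proposal is correct and follows essentially the same route as the paper: the paper's own proof is exactly the combination of Theorem \ref{lem} (for $(L^2,L^\gamma)$-continuity), Theorem \ref{lem2} (for $(L^2,H_0^1)$-continuity under \eqref{f_add}), Lemma \ref{lem8.1} (to place $\A$ in $L^p$, and in every $L^\gamma$ when $g=0$), Lemma \ref{lem5} (to place $\A$ in $H_0^1$), all fed into Proposition \ref{th-bi}. Your added care about the distinction between attraction in $L^\gamma$ (which needs no inclusion $\A\subset L^\gamma$) and compactness in $L^\gamma$ (which does) matches the paper's intent precisely.
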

 \begin{proof}
 Theorem \ref{lem} shows that the semigroup generated by \eqref{eq} is $(L^2,L^\gamma)$-continuous for any $\gamma\geq 2$, and by Lemma \ref{lem8.1} the attractor is bounded in $L^p$ with $g\in L^2$ and is bounded in any $L^\gamma$ when $g=0$. In addition, with \eqref{f_add}, by Theorem \ref{lem2} the system is $(L^2,H_0^1)$-continuous and the attractor $\A$ is bounded in $H_0^1$ by Lemma \ref{lem5}. Hence, the theorem follows from 
 Proposition \ref{th-bi}.
 \end{proof}

 \subsection{Finite fractal dimensions} 
 As already noted in Remark \ref{rem},
 Theorem \ref{lem} indicates a smoothing property of the semigroup of \eqref{eq}, which is known important in estimating the upper bounds of the dimensions of a global attractor as well as in constructing an exponential attractor and further estimating its attracting rate, see, e.g., \cite{eden94,miranville08chapter,caraballo17dcds}, etc. 
 In the following we study the fractal dimension of the global attractor $\A$ and its translation $\A-z_0$ as an example to make use of the new smoothing properties. 
 
 Recall that the fractal dimension \cite{robinson11} of a compact subset $A$ of a Banach space $X$ is defined by 
 \[
 dim_F(A;X) =\liminf_{\varepsilon\to 0^+} \frac{\log N_{\varepsilon }(A;X)} {-\log \varepsilon} ,
 \]
 where $ N_\varepsilon(A;X)$ denotes the minimal number of $\varepsilon$-balls in $X$ necessary to cover $A$.
 
 For subsets of $X $ that are not included in $Y$ it mathematically makes no sense to talk about the covers by balls in $Y$, but it is possible to study the $\varepsilon$-nets under the metric of $Y$. 
 
 \begin{definition} \rm
 Let $A$ be a nonempty subset of $X$ and $\varepsilon>0$. An $\varepsilon$-net of $A$ under the metric of $Y$, called shortly an $\varepsilon|_Y$-net, is a subset $ E$ of $A$ satisfying that for any $a\in A$ there exists an $a_0\in E$ such that $\| a-a_0\|_Y<\varepsilon$. 
 \end{definition}
 Note that not all the subsets of $X$ have $\varepsilon|_Y$-nets. If $A\subset Y$, then an $\varepsilon|_Y$-net $ E $ corresponds to a cover by $\varepsilon$-balls in $Y$ centered at every element of $E$.

 \begin{lemma} \label{lem-a}
 Let $A$ be a nonempty subset of $X$ and $x\in A$. Suppose that $ \mathcal M$ is a mapping from $X$ to $X$ (not necessarily taking values in $Y$) which is $(X,Y)$-smoothing
 \be \label{cond}
 \|\mathcal M(x_1)-\mathcal M(x_2)\|_Y\leq L \|x_1-x_2\|_X^\delta,\quad \forall x_1,x_2\in X, \|x_1-x_2\|\leq 1,
 \ee 
 for some constants $L>0$ and $\delta >0$. 
 Then 
 \begin{itemize}
\item[(i)]
 $A$ has an $\varepsilon|_X$-net $ E$ iff $A-x$ has an $\varepsilon|_X$-net $ E-x$;
 \item[(ii)] if for any $x_1,x_2\in A$ we have $x_1-x_2\in Y$, then 
 $A$ has an $\varepsilon|_Y$-net $E$ iff $A-x$ has an $\varepsilon|_Y$-net $ E-x$;
 \item[(iii)] for any $\varepsilon\in (0,1]$, $A$ has an $\varepsilon|_X$-net $ E$ implies that $\mathcal M(A)$ has an $L\varepsilon^\delta |_Y$-net $\mathcal M(E) $.
\end{itemize}
 
 \end{lemma}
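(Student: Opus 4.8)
The plan is to deduce all three statements directly from two elementary facts: the translation invariance of the norms $\|\cdot\|_X$ and $\|\cdot\|_Y$, and the quantitative smoothing estimate \eqref{cond}. No compactness, completeness, or structure of $\mathcal M$ beyond \eqref{cond} is needed.

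For (i), I would simply note that for $a\in A$ and $e\in E$ one has $(a-x)-(e-x)=a-e$, hence $\|(a-x)-(e-x)\|_X=\|a-e\|_X$; combined with the obvious equivalence $E\subset A\Leftrightarrow E-x\subset A-x$, this shows that $E$ is an $\varepsilon|_X$-net of $A$ if and only if $E-x$ is an $\varepsilon|_X$-net of $A-x$. Part (ii) is proved in exactly the same way, the only additional point being that under the hypothesis ``$x_1-x_2\in Y$ for all $x_1,x_2\in A$'' the quantity $\|\cdot\|_Y$ is well defined on differences of elements of $A$, and therefore also on differences of elements of $A-x$ since $(a_1-x)-(a_2-x)=a_1-a_2\in Y$; after this observation, translation invariance of $\|\cdot\|_Y$ again yields the equivalence.

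For (iii), given $\varepsilon\in(0,1]$ and an $\varepsilon|_X$-net $E$ of $A$, I would take an arbitrary point $\mathcal M(a)\in\mathcal M(A)$ with $a\in A$, choose $e\in E$ with $\|a-e\|_X<\varepsilon\leq 1$, and apply \eqref{cond} to obtain $\mathcal M(a)-\mathcal M(e)\in Y$ together with $\|\mathcal M(a)-\mathcal M(e)\|_Y\leq L\|a-e\|_X^{\delta}<L\varepsilon^{\delta}$. Since $\mathcal M(e)\in\mathcal M(E)\subset\mathcal M(A)$, this exhibits $\mathcal M(E)$ as an $L\varepsilon^{\delta}|_Y$-net of $\mathcal M(A)$, which is precisely the claim.

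There is no genuine obstacle in this argument; the only point that requires care is the well-definedness of the $Y$-metric on the sets in question — this is exactly why part (ii) carries the extra hypothesis $x_1-x_2\in Y$, and why in part (iii) one imposes the restriction $\varepsilon\leq 1$, which guarantees that \eqref{cond} is actually applicable to the pair $(a,e)$ produced by the $\varepsilon|_X$-net.
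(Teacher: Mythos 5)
Your proof is correct and follows essentially the same route as the paper: the paper likewise dismisses (i) and (ii) as immediate from the definitions (your translation-invariance remark is exactly the implicit justification) and proves (iii) by picking $e\in E$ with $\|a-e\|_X<\varepsilon\leq 1$ and applying \eqref{cond} to get the $L\varepsilon^\delta|_Y$-net $\mathcal M(E)$. Nothing further is needed.
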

 \begin{proof} (i) and (ii) are straightforward by definition, and we prove 
 (iii). Since $\varepsilon\in (0,1]$ and $A$ has an $\varepsilon|_X$-net $ E$, for any $\mathcal M(a)\in \mathcal M(A)$ there exists an $a_0\in E $ such that $\|a-a_0\|_X<\varepsilon \leq 1$, so by \eqref{cond}
 \[ 
 \|\mathcal M(a)-\mathcal M(a_0)\|_Y \leq L\|a-a_0\|_X^\delta < L\varepsilon^\delta,
 \]
 i.e., $ \mathcal M(E)$ is indeed an $L\varepsilon^\delta |_Y$-net of $\mathcal M(A)$.
 \end{proof}

 For a nonempty set $E $ we denote by $^\# \! E$ the cardinality of $E$, where $^\#\! E=\infty$ is allowed. For a subset $A$ of $X$ that has finite $\varepsilon|_Y$-nets, by $\mathcal N_{\varepsilon,Y}(A)$ we denote the $\varepsilon|_Y$-net of $A$ that has minimal cardinality, i.e., if $E$ is another $\varepsilon|_Y$-net, then $^\#\! \mathcal N_{\varepsilon,Y}(A) \leq {} ^\#\! E$.
 Then applying Lemma \ref{lem-a} to the reaction-diffusion equation \eqref{eq} we obtain
 \begin{theorem} Suppose that conditions \eqref{f1}-\eqref{kappa} hold, $g\in L^2(D)$, and that $\A$ is the finite dimensional global attractor of \eqref{eq} in $L^2(D)$. Then 
 \begin{itemize}
\item[(i)]
 $\A$ is a finite dimensional compact subset of $L^p(D)$ with 
 \ben
 dim_F(\A; L^p(D)) \leq \frac p2 dim_F(\A;L^2(D)) ;
 \ee
 \item[(ii)] for any $z_0\in \A$ and $\gamma\geq 2$, the translation $\A-z_0$ of the attractor is a finite dimensional compact subset of $L^{\gamma}(D)$ with 
 \ben
 dim_F(\A-z_0; L^{\gamma}(D)) \leq \frac {\gamma}2 dim_F(\A;L^2(D)) ;
 \ee
 \item[(iii)] if, moreover, condition \eqref{f_add} holds, then the global attractor $\A$ is a finite dimensional compact subset of $H_0^1(D)$ with
 \ben
 dim_F(\A; H_0^1(D)) \leq (p-1) dim_F(\A;L^2(D)) .
 \ee
\end{itemize} 
 \end{theorem}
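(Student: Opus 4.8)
The plan is to obtain each of the three fractal-dimension bounds by combining the already-established $(L^2,\cdot)$-smoothing properties with Lemma~\ref{lem-a}(iii) and the abstract translation invariance of $\varepsilon$-nets from Lemma~\ref{lem-a}(i)--(ii), together with the standard fact that $\dim_F(\A;L^2)<\infty$ from Lemma~\ref{lem5}. The unifying observation is that the time-one map $S(1,\cdot)$ is $(L^2,Y)$-smoothing with H\"older exponent $\delta$ depending on the target space $Y$: by Remark~\ref{rem}, for any $\gamma\ge2$ there is $c_\gamma$ with $\|S(1,u_{0,1})-S(1,u_{0,2})\|_\gamma^\gamma\le c_\gamma\|u_{0,1}-u_{0,2}\|^2$ whenever $\|u_{0,1}-u_{0,2}\|\le1$, i.e. $\delta=2/\gamma$ for $Y=L^\gamma$; and by Remark~\ref{rem2}, $\|\nabla(S(1,u_{0,1})-S(1,u_{0,2}))\|\le c\|u_{0,1}-u_{0,2}\|^{1/(p-1)}$ for $\|u_{0,1}-u_{0,2}\|\le1$, i.e. $\delta=1/(p-1)$ for $Y=H_0^1$.

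\emph{Step 1 (net-counting estimate).} First I would fix a small $\varepsilon\in(0,1]$ and take a minimal $\varepsilon|_{L^2}$-net $E\subset\A$ of cardinality $N_\varepsilon(\A;L^2)$. Since $\A$ is invariant, $\A=S(1,\A)$, so Lemma~\ref{lem-a}(iii) applied with $\mathcal M=S(1,\cdot)$ and the appropriate $\delta$ gives that $S(1,E)$ is an $L\varepsilon^\delta|_Y$-net of $\A=S(1,\A)$ (for part (i) with $Y=L^p$, $\delta=2/p$; for part (iii) with $Y=H_0^1$, $\delta=1/(p-1)$). For part (ii) we cannot say $\A\subset L^\gamma$ for $\gamma>p$, so instead I would work with the translation $\A-z_0$: by Lemma~\ref{lem-a}(i), $E-z_0$ is an $\varepsilon|_{L^2}$-net of $\A-z_0$, and since $x_1-x_2\in L^\gamma$ for all $x_1,x_2\in\A$ by the $(L^2,L^\gamma)$-continuity (Theorem~\ref{lem}/Definition~\ref{def_bi}), one checks directly that $S(1,E)-S(1,z_0)$ is an $L\varepsilon^\delta|_{L^\gamma}$-net of $\A-z_0=S(1,\A)-S(1,z_0)$ with $\delta=2/\gamma$. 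In all cases the new net has cardinality at most $N_\varepsilon(\A;L^2)$.

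\emph{Step 2 (passing to the dimension).} Then I would feed this into the definition of fractal dimension. Writing $\eta=L\varepsilon^\delta$, we have $N_{\eta,Y}(\,\cdot\,)\le N_\varepsilon(\A;L^2)$ with $\varepsilon=(\eta/L)^{1/\delta}$, so
\[
\frac{\log N_{\eta,Y}(\,\cdot\,)}{-\log\eta}\;\le\;\frac{\log N_{(\eta/L)^{1/\delta}}(\A;L^2)}{-\log\eta}\;=\;\frac1\delta\cdot\frac{\log N_{(\eta/L)^{1/\delta}}(\A;L^2)}{-\log(\eta/L)^{1/\delta}}\cdot\frac{-\log(\eta/L)^{1/\delta}}{-\log\eta}.
\]
Letting $\eta\to0^+$ (equivalently $\varepsilon\to0^+$), the last fraction tends to $1$ and the middle one has $\liminf$ equal to $\dim_F(\A;L^2)$, so $\dim_F(\,\cdot\,;Y)\le\frac1\delta\dim_F(\A;L^2)$. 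With $\delta=2/p$ this is part (i); with $\delta=2/\gamma$ (applied to $\A-z_0$, which is indeed contained in $L^\gamma$ by Proposition~\ref{th-bi}(iv), so that its $\varepsilon|_{L^\gamma}$-nets are genuine covers by $L^\gamma$-balls) this is part (ii); with $\delta=1/(p-1)$ this is part (iii). Compactness of $\A$ in $L^p$ and $H_0^1$, and of $\A-z_0$ in $L^\gamma$, is already furnished by the previous theorem (via Proposition~\ref{th-bi}), so the finiteness of these dimensions together with compactness gives the stated conclusions.

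\emph{Main obstacle.} The only genuinely delicate point is bookkeeping about \emph{which} set lives in \emph{which} space: for $\gamma>p$ the attractor $\A$ itself need not lie in $L^\gamma$, so one must argue throughout with the translate $\A-z_0$ and verify that all the $\varepsilon|_{L^\gamma}$-net manipulations are legitimate — this is exactly what Lemma~\ref{lem-a}(ii) is designed for, and the invariance identity $\A-z_0=S(1,\A)-S(1,z_0)$ together with Definition~\ref{def_bi} guarantees the relevant differences lie in $L^\gamma$. The rest is the routine change-of-variables in the $\liminf$ defining $\dim_F$, for which it is essential that the smoothing exponent $\delta$ is a fixed positive constant independent of $\varepsilon$.
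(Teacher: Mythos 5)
Your argument is correct and follows essentially the same route as the paper: the smoothing exponents $\delta=2/\gamma$ (Remark \ref{rem}) and $\delta=1/(p-1)$ (Remark \ref{rem2}), Lemma \ref{lem-a} applied to $\mathcal M=S(1,\cdot)$ together with the invariance $S(1,\A)=\A$, and the change of variables $\eta=L\varepsilon^{\delta}$ in the $\liminf$ defining $\dim_F$. The only slip is the identity $\A-z_0=S(1,\A)-S(1,z_0)$, which fails in general since $S(1,z_0)\neq z_0$; this is harmless because $\varepsilon|_{L^\gamma}$-net cardinalities are translation invariant (Lemma \ref{lem-a}(i)--(ii)), which is exactly how the paper sidesteps the issue by comparing nets of $\A-z_0$ with nets of $\A$ itself under the $L^\gamma$-metric.
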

 \begin{proof}
 We prove (ii), and (i) and (iii) are concluded analogously by Theorem \ref{lem} and Theorem \ref{lem2}, respectively.
 By Theorem \ref{lem} and Remark \ref{rem}, for some constant $c>0$ 
 \[
 \| S(1, u_{0,1} )-S(1,u_{0,2}) \|_{\gamma} \leq c \| u_{0,1}-u_{0,2}\|^{\frac 2{\gamma}} ,\quad \forall \|u_{0,1}-u_{0,2}\|\leq 1 .
 \]
 Hence, by Lemma \ref{lem-a} with $\mathcal M=S(1,\cdot)$ and $X=L^2$, $Y=L^{\gamma}$, $\delta=\frac 2 {\gamma}$, we have 
 \ben
 dim_F(\A-z_0;Y) & =\liminf_{\varepsilon\to 0^+} \frac{\log{} ^\# \! \mathcal N_{\varepsilon,Y}(\A-z_0)} {-\log \varepsilon} \quad \text{(since $
 \A-z_0\subset Y$) } \\
 & =\liminf_{\varepsilon\to 0^+} \frac{\log {} ^\# \! \mathcal N_{\varepsilon,Y}(\A) } {-\log \varepsilon} \quad \text{ (by Lemma \ref{lem-a} (ii))} \\
 & =\liminf_{\varepsilon\to 0^+} \frac{\log {} ^\# \! \mathcal N_{L\varepsilon^\delta,Y}(S(1,\A) )} {-\log L\varepsilon^\delta} \quad \text{ (since $S(1,\A)=\A$)} \\
 & \leq \liminf_{\varepsilon\to 0^+} \frac{\log {}^\# \! \mathcal N_{\varepsilon, X }( \A) } {- \log L\varepsilon^\delta } \quad \text{ (by Lemma \ref{lem-a} (iii))} \\
 &=\frac 1\delta dim_F(\A;X) \quad \text{(since $\A\subset X$)} 
 \ee
 as desired. The proof is complete. 
 \end{proof}

 \section*{Acknowledgements} 
 Cui was partially funded by NSFC Grant 11801195 and the Fundamental Research Funds for the Central Universities 5003011026. Kloeden was partially supported by the Chinese NSF grant 11571125.
 Zhao was partially supported by CTBU Grant 1751041.

 \vspace{3mm}
 \today
 \vspace{3mm}
\end{document}